\newcommand{\wt     }[1] {\widetilde{#1}}
\def\XXint#1#2#3{{\setbox0=\hbox{$#1{#2#3}{\int}$}
     \vcenter{\hbox{$#2#3$}}\kern-.5\wd0}}
\newcommand{\nifra  }[2] {\leavevmode\kern.1em\raise.5ex
  \hbox{\the\scriptfont0 #1}\kern-.1em/\kern-.15em\lower.25ex
  \hbox{\the\scriptfont0 #2}}
\newcommand{\mnifra  }[2] {\leavevmode\kern.1em\raise.5ex
  \hbox{\the\scriptfont0 $#1$}\kern-.1em/\kern-.15em\lower.5ex
  \hbox{\the\scriptfont0 $#2$}}
\newcommand{\z      }    {\frac{1}{2}}
\newcommand{\eps    }    {\varepsilon}
\newcommand{\dimx}{d}
\newcommand{\xb}{\overline x}
\newcommand{\telem}{\kappa}			
\newcommand{\teleme}{\telem_e}		
\newcommand{\belemx}{\partial \telem_{*}}	
\newcommand{\intk}{\int_\telem}	
\newcommand{\intdk}{\int_{\partial\telem}}	
\newcommand{\intdkx}{\int_{\belemx}} 
\newcommand{\no}{\nu}
\newcommand{\h}{h}
\newcommand{\triagn}{\mathcal T^n_\h}
\newcommand{\triagnn}[1]{\mathcal T^{#1}_\h}
\newcommand{\triag}{\mathcal T_\h}
\newcommand{\sumnk}{\sum_{n,\telem}}
\newcommand{\dx}{\, \mathrm{d}x}
\newcommand{\ds}{\, \mathrm{d}s}
\newcommand{\dxb}{\, \mathrm{d}\overline x}
\newcommand{\Rn}{\mathbb{R}^{\dimx}_n}
\newcommand{\Rnn}[1]{\mathbb{R}^{\dimx}_{#1}}
\newcommand{\dxr}{\, \mathrm{d}}
\newcommand{\normgen}[1]{|\!|#1|\!|}	
\newcommand{\seminorm}[1]{|#1|}		
\newcommand{\projh}{\pi_\h}			
\newcommand{\lpnorm}[3]{\Vert #1\Vert_{L^{#2}(#3)}}	
\newcommand{\poldeg}{q}
\newcommand{\altpoldeg}{p}
\newcommand{\Vsp}{V_{\h,\poldeg}}
\newcommand{\Vspn}{V_{\h,\poldeg}^n}
\newcommand{\polsp}{\mathcal P}
\newcommand{\polp}{\polsp^\poldeg}
\newcommand{\Lp}[1]{{L^{#1}}}
\newcommand{\Ltwo}{\Lp{2}}
\newcommand{\Linf}{\Lp{\infty}}
\newcommand{\Wmp}[2]{W^{#1,#2}} 
\newcommand{\Hm}[1]{H^#1} 
\newcommand{\uu}{u}			
\newcommand{\uh}{\uu_\h}		
\newcommand{\vv}{v}			
\newcommand{\vvp}{v^+}			
\newcommand{\vp}{\phi}
\newcommand{\uhp}{\uu_\h^+}
\newcommand{\uhm}{\uu_\h^-}
\newcommand{\uhk}{\uu_\h^{(\telem)}}
\newcommand{\uhkn}{\uu_\h^{(\teleme)}}
\newcommand{\fhat}{\hat f}
\newcommand{\fst}{\wt f}
\newcommand{\q}{F}				
\newcommand{\qst}{\wt \q}		
\newcommand{\epsvisc}{\eps}		
\newcommand{\entr}{\eta}
\newcommand{\entrp}{\entr'}
\newcommand{\entrpp}{\entr''}
\newcommand{\eentr}{e_{{\entr},\h}}
\newcommand{\semilin}[2]{\mathcal N(#1,#2)}
\newcommand{\semilinDG}[2]{\mathcal N^{DG}(#1,#2)}
\newcommand{\semilinS}[2]{\mathcal N^{SC}(#1,#2)}
\newcommand{\viscexp}{\beta}
\newcommand{\viscexpc}{\frac{\beta}{3}}
\newtheorem{thrm}{Theorem}[section]
\newtheorem{lem}{Lemma}[section]
\newtheorem{prop}{Proposition}[section]
\newtheorem{cor}{Corollary}[section]
\newcommand{\f}{f}
\newcommand{\g}{g}
\newcommand{\hh}{h}
\title{On the Convergence of Space-Time Discontinuous Galerkin Schemes for Scalar Conservation Laws}
\author{Georg May\footnotemark[1] \and Mohammad Zakerzadeh\footnotemark[1]  }
\begin{document}

\maketitle

\renewcommand{\thefootnote}{\fnsymbol{footnote}}
\footnotetext[1]{Aachen Institute for Advanced Study in Computational Engineering Science, RWTH Aachen, 52062 Aachen, Germany 
        (\texttt{\{may,zakerzadeh\}@aices.rwth-aachen.de}).}
\renewcommand{\thefootnote}{\arabic{footnote}}

\begin{abstract}
We prove convergence of a class of space-time discontinuous Galerkin schemes for scalar hyperbolic conservation laws. Convergence to the unique entropy solution is shown for all orders of polynomial approximation, provided strictly monotone flux functions and  a suitable shock-capturing operator are used. The main improvement, compared to previously  published results of similar scope, is that  no streamline-diffusion stabilization is used. This  is the way discontinuous Galerkin schemes were originally proposed, and are most often used in practice. 
\end{abstract}

\begin{keywords} 
Conservation Law, Discontinuous Galerkin Method, Convergence Analysis, Entropy Solution
\end{keywords}

\begin{AMS}
35L65, 	65M60, 65M12
\end{AMS}

\pagestyle{myheadings}
\thispagestyle{plain}
\markboth{\uppercase{G. May AND M. Zakerzdeh}}{\uppercase{Convergence of DG Schemes for Scalar Conservation Laws}}

\section{Introduction}

In the present paper we consider space-time discontinuous Galerkin (DG) schemes for the scalar conservation law
%
\begin{align}
	\label{eq:manProb1}
	\uu_t +\sum_{i=1}^d f_i(\uu)_{x_i} &= 0 ,  &&\mathrm{in}\  (0,\infty)  \times \mathbb{R}^d   =: \mathbb{R}^{d+1}_+\\
	\label{eq:manProb2}
	\uu(0,\cdot ) &= \uu_0,  &&\mathrm{in}\ \mathbb{R}^d ,
\end{align}
where   $f=(f_1, \dots , f_d)$ is a smooth flux function. We assume that $\uu_0\in\Linf(\mathbb{R}^d)$ with compact support. A classic result states the existence of a unique entropy solution in $\Linf(\mathbb{R}^{d+1}_+)$, still with compact support~\cite{godlewski:1991hyp,Kruzkov1970FIRST-ORDER-QUA}.
Recall that a weak solution to~\eqref{eq:manProb1} is an entropy solution if it satisfies, in the distributional sense,
\begin{equation}
	\entr(\uu)_t + \sum_{i=1}^d  \q_i(\uu)_ {x_i} \leq 0 
\end{equation}
for any convex entropy function $\entr(\uu)$ and  associated entropy flux function $\q=(\q_1, \dots, \q_d)$ that satisfy the compatibility condition $\q_i' = f_i'\entrp $ for $ i=1, \dots , d$.

We prove convergence of a class of DG schemes to the unique entropy solution of~\eqref{eq:manProb1},~\eqref{eq:manProb2}. Using DiPerna's theory of measure valued solutions~\cite{DiPerna1985Measure-valued-}, we can assert  strong convergence {in $\Lp{\altpoldeg}$ for $1\leq \altpoldeg < \infty$}, provided that the approximate solution is
\begin{enumerate}[i)]
\item  uniformly bounded in $\Linf(\mathbb{R}_+^{d+1})$,
\item weakly consistent with all entropy inequalities,
\item strongly consistent with the initial data.
\end{enumerate}
These statements will be made more precise below. It is worth noting that, for Cauchy problems of the type~\eqref{eq:manProb1},~\eqref{eq:manProb2}, one may use Szepessy's extension~\cite{szepessy:existence} to DiPerna's theory, to replace the first criterion with the weaker requirement that the approximate solution be bounded in $\Linf(\mathbb{R}_+;\Lp{\altpoldeg}( \mathbb{R}^{d}))$. We comment on this distinction below.

Convergence results along these lines have been obtained previously for both continuous Galerkin (CG) and DG  finite-element methods~\cite{Szepessy1989Convergence-of-,jaffre:1995convDG,Henke2013LL-boundedness-}. The schemes that were considered in the above references, both CG and DG, have in common that they use streamline-diffusion stabilization in conjunction with a shock-capturing operator. At least in the case of DG methods, streamline-diffusion stabilization is not commonly found in practical implementations~\cite{Wang2013High-order-CFD-,Bassi1997High-Order-Accu,cockburn2001:dgConvection,hartmann:2002errorHyperbCons}. For linear hyperbolic problems, the  analysis of DG schemes has long ago made the transition from such schemes that use streamline-diffusion stabilization~\cite{Houston2000Stabilized-hp-F} to schemes that do not~\cite{Houston2002Discontinuous-h}. However, the techniques used for linear problems have no direct extension to the nonlinear case. To the best of our knowledge, a proof of convergence, similar in scope to that presented here,  is not available for DG schemes  applied to nonlinear conservation laws, unless  streamline-diffusion stabilization is used. (In addition to the references cited above, we also mention the error estimates provided in~\cite{Cockburn1996Error-Estimates}.) 

A proof of convergence exists for CG schemes without streamline-diffusion stabilization, using polynomials of degree $\poldeg = 1$~\cite{Nazarov2013Convergence-of-}. In the present paper, convergence of a space-time DG scheme for~\eqref{eq:manProb1},~\eqref{eq:manProb2}, similar to that of~\cite{jaffre:1995convDG}, is proved for $\uu_0\in \Linf(\mathbb{R}^d)$ with compact support, and any degree $\poldeg\geq 1$ of polynomial approximation. The scheme does not use  streamline-diffusion stabilization, but only a suitable shock-capturing operator.
For our proof, we use DiPerna's framework \cite{DiPerna1985Measure-valued-}, requiring a uniform $\Linf$-bound on the approximate solution, as opposed to the modification due to Szepessy~\cite{szepessy:existence}. The reason is that, firstly, showing the existence of such a uniform bound is an interesting result in its own right. Secondly, the existence of such a bound  facilitates the proofs in this paper, and thirdly, it allows the extension to intial-boundary value problems. (See, for example, the results in~\cite{szepessy:1991convSDwBdy,Henke2013LL-boundedness-}.) 

The paper is organized as follows: In section~\ref{sec:spacetime} we introduce the space-time DG scheme, and define some notation. In section~\ref{sec:entropyForm}, using a suitable test function, we re-write the scheme in a convenient form  from which all the remaining results can be derived. We prove a uniform bound of the numerical solution in $\Linf$,  and the weak entropy consistency of our DG scheme in sections~\ref{sec:quadratic} and~\ref{sec:entropy}, respectively.  Noting consistency with initial data, we are then in a position to conclude that the DG scheme converges to the unique entropy solution.

\section{Space-Time DG Scheme}
\label{sec:spacetime}
%
%

We define the space-time coordinates $x=(x_0, x_1, \dots , x_d)$ with $x_0\equiv t$. It will sometimes  be convenient to write $\xb=(x_1, \dots , x_d)$ for  the spatial variables.
For arbitrary $T>0$, consider a sequence of time instances $0\equiv t_0 < t_1 < \dots < t_N \equiv T$, and define corresponding time intervals $I_n=(t_n,t_{n+1})$.  Let $\triagn=\{\telem\}$  be a subdivision of the space-time slab  $I_n \times \mathbb{R}^d$ into disjoint {simplices}. Define $\triag:= \bigcup_{n=0}^{N-1} \triagn$, and let  $\h:=\sup_{\telem\in\triag} \h_\telem$, where  $\h_\telem:=diam(\telem)$. (We may assume $\h < \infty$.)  Convergence analysis will be carried out for shape-regular families of triangulations $\{\triag\}$, which means there is a constant $C>0$, independent of $\h$, such that
\begin{equation}
	\label{eq:shapereg2}
	\frac{1}{C}\leq \frac{\h_\telem |\partial \telem|}{|\telem|} \leq C  , \quad \forall \telem \in \triag.
\end{equation}
 
We define the approximation spaces
\begin{equation}
	\label{eq:ApproxSpace}
	\Vspn = \left\{  \vv :\,  \vv|_{\telem}\in \polp(\telem)\ \forall\telem \in \triagn \right\}, \qquad \Vsp = \prod_{n=0}^{N-1} \Vspn ,
\end{equation}
where $\polp$ is the space of $d+1$-variate polynomials of total degree $\poldeg$. In the following, we assume that $\poldeg\geq 1$. (The case $\poldeg = 0$ is dealt with in Ref.~\cite{cockburn:errorFV1994}.) Owing to compact support of the initial data, and the finite domain of dependence, we can assume that $\vv=0$ for large $|x|$. 

The scheme which we analyze is defined by determining  $\uh\in \Vsp$, such that 
\begin{equation}
	\label{eq:DGfinalScheme}
	\semilin{\uh}{\vv} =  \semilinDG {\uh}{\vv}  + \semilinS{\uh}{\vv}  = 0, \qquad \forall \vv \in \Vsp.
\end{equation}
Introducing the shorthand notation $\sumnk := \sum_{n=0}^{N-1} \sum_{\telem\in\triagn}$, we  define the semi linear form
\begin{equation}
	\label{eq:DGspacetime}
		 \semilinDG{\uh}{\vv}  :=	\sumnk\left\{ \intk  \nabla \cdot \fst (\uh)\vv  \dx +  \intdk \left( \fhat - \fst(\uhk) \cdot \no\right)\vv\ds  \right\},
\end{equation}
where $\fst=(\uu,f(\uu))$ is the space-time flux. 
%
%
On $\partial\telem$ we  define the interior  trace values as $\uhk(x)=\lim_{\epsilon\to 0^-} \uh(x+\epsilon \no)$, where $\no=(\no_t, \no_{x_1},\dots , \no_{x_d})$ is the outward pointing normal on $\partial \telem$ at $x$. We denote as $\teleme\in \triag$ the element separated from $\telem$ on $e\subset \partial\telem$, and introduce the numerical flux function $\fhat \equiv\fhat (\uhk,\uhkn;\no)$, defined to be  Lipschitz-continuous, strictly monotone, and consistent with $\fst$. 
On interfaces where $\no=(\pm1,0,\dots, 0)^T$ we assume that the numerical flux reduces to pure upwinding (i.e. in time). In this case, for any $0\leq n \leq N-1$,  we can re-write the boundary integral 
\begin{equation}
	\label{eq:boundaryIntSplit}
	\sum_{\telem\in\triagn}  \intdk \! \left( \fhat - \fst(\uhk) \cdot \no\right)\vv\ds   =\sum_{\telem\in\triagn}  \intdkx \! \left( \fhat - \fst(\uhk) \cdot \no\right)\vv\ds  +\int_{\mathbb{R}_n^d} (\uhp - \uhm) \vvp\dxb ,
\end{equation}
where $\Rn:=\{ t_n\} \times\mathbb{R}^d $, and $\belemx$ is the "internal" boundary of  $\telem\in\triagn$, consisting of points that are shared by at least two elements in $I_n\times \mathbb{R}^d$. Furthermore for $x\in \Rn$, we  define $\vv^\pm:= \lim_{\epsilon\to 0}\vv(t_n\pm \epsilon,\xb)$.
 

We augment the semi linear form~\eqref{eq:DGspacetime} with a shock-capturing operator
\begin{equation}
	\label{eq:DGspacetimeS}
		 \semilinS{\uh}{\vv}  := \sumnk  \intk \epsvisc(\uh) \nabla \uh \cdot \nabla \vv\dx ,
\end{equation}
%
%
with the viscosity parameter
\begin{eqnarray}
	\label{eq:shockcapt}
	\epsvisc(\uh){|_{\telem}} :=  \h^\viscexp\, C_\epsvisc \frac{\intk |\nabla\cdot \fst(\uh)|\dx +  \intdk |\uhk - \uhkn| \ds}{\intk\dx}, \qquad \telem \in \triag,
\end{eqnarray}
where { $\z<\viscexp < 2$}, and $C_\epsvisc$ is a positive constant of order unity. (In the following  we set $C_\epsvisc=1$.) 


We define the norm
\begin{equation}
	\normgen{\fst'}_\infty :=  \sup_{\xi\in \mathbb{R}}|\fst'(\xi)| , \qquad | \fst' | := \left( \sum_{i=0}^d f_i'(\xi)^2 \right)^{\z},
\end{equation}
and assume that $\normgen{\fst'}_\infty < \infty$.\footnote{This is not a severe restriction, because the true solution to problem~\eqref{eq:manProb1},~\eqref{eq:manProb2} satisfies a maximum principle. In order to comply with the growth restriction, we may apply a smooth modification to $f(\vv)$  for $\vv$ outside the interval $[\min_{x\in \mathbb{R}^\dimx} \uu_0, \max_{x\in \mathbb{R}^\dimx} \uu_0]$.}   This implies that 
\begin{eqnarray}
	|\nabla\cdot \fst(\vv)| &\leq& \normgen{\fst'}_{\infty} |\nabla \vv|,\qquad \forall  \vv\in \Vsp .\label{eq:resEstm}
\end{eqnarray}
In the following $C$ shall denote a generic positive constant, independent of $\uh$ and $\h$, but not necessarily the same at each occurrence.

\section{The Canonical Entropy Test}
\label{sec:entropyForm}

All results in this paper can be obtained by considering in~\eqref{eq:DGfinalScheme}  test functions of the form $\vv=\projh(\entrp \vp)$. Here, $\projh$ is a suitable projection  operator into the space $\Vsp$,  $\vp$ is a smooth non-negative  function, and  $\entr$ is a convex entropy. 
Following~\cite{jaffre:1995convDG}, we define $\projh$ using an $\Hm{1}$-projection  such that for all $\telem\in \triag$
\begin{align}
	\label{eq:H1proj}
	\intk\nabla (\projh (\entrp \phi)) \cdot \nabla \vv \dx &= 	\intk\nabla (\entrp \phi) \cdot \nabla \vv \dx , \quad \forall \vv\in \polp(\telem)\\
	\intk \projh(\entrp \phi) \dx &= \intk \entrp \phi \dx. \notag
\end{align}
%
Using $\vv=\projh(\entrp \vp)$ in~\eqref{eq:DGfinalScheme} yields
\begin{equation}
	\label{eq:masterEqDG}
	 \semilinDG{\uh}{\entrp \vp}  +  \semilinDG{\uh}{\projh(\entrp \vp)-\entrp \vp}  +  \semilinS{\uh}{\entrp \vp}  = 0.
\end{equation}
Note that $ \semilinS{\uh}{\projh(\entrp \vp)}=  \semilinS{\uh}{\entrp \vp}$,  which is a consequence of the definition of the $\Hm{1}$-projection~\eqref{eq:H1proj}. 
We define the space-time entropy flux  $\qst:=(\entr, \q_1, \dots , \q_d)$. Then $\qst$ satisfies the extended compatibility relation
\begin{equation}
	\label{eq:spacetimeEntrRel}
	\qst_i'=\fst_i'\entrp, \qquad i=0, \dots , d.
\end{equation}
The first term in~\eqref{eq:masterEqDG} may be rewritten as
\begin{align}
	 \semilinDG{\uh}{\entrp\vp} &= \sumnk\left(\intk \nabla \cdot \qst(\uh)  \vp\dx + \intdkx \left( \fhat - \fst(\uhk) \cdot \no\right)\entrp(\uhk)\vp\ds\right) \notag \\
	&\qquad + \sum_{n=0}^{N-1} \int_{\Rn} (\uhp - \uhm) \entrp (\uhp) \vp\dxb,\notag%
\end{align}
where the boundary integrals are split as in~\eqref{eq:boundaryIntSplit}. Integration by parts yields
\begin{align}
	\label{eq:entropyFormTemp}
	&  \semilinDG{\uh}{\entrp \vp} = -\int_{\mathbb{R}^d_T} \qst(\uh)  \cdot \nabla\vp  \dx +\sumnk \intdkx \qst(\uhk)\cdot \no \vp\ds\\ &\qquad +\sumnk \intdkx  \left( \fhat - \fst(\uhk) \cdot \no\right)\entrp(\uhk)\vp\ds + \int_{\Rnn{N}}\entr(\uhm)\vp\dxb  \notag  \\ 
	& \qquad - \int_{\mathbb{R}^{d}}\entr(\uu_0)\vp\dxb+\sum_{n=0}^{N-1} \int_{\Rn} \left( \entr(\uhm)-\entr(\uhp) + (\uhp - \uhm) \entrp (\uhp)\right) \vp\dxb, \notag
\end{align}
where $\mathbb{R}^d_T:= (0,T)\times \mathbb{R}^d$ has been defined, and we have equated $\uhm\equiv \uu_0$ for $n=0$.  (In practice one might use a suitable projection for the initial conditions, but this is of minor importance here.)  Using~\eqref{eq:spacetimeEntrRel}, we have
\begin{align}
	\sumnk  \intdkx \qst(\uhk)\cdot \no \vp \ds &= \sumnk\z \intdkx( \qst(\uhk)- \qst(\uhkn))\cdot \no \vp \ds  \notag \\
	&=\sumnk\z \intdkx\int_{\uhkn}^{\uhk} \qst'(\xi)\cdot \no \vp \dxr \xi \ds\notag\\
	&=\sumnk\z \intdkx \left\{ \left. \fst\cdot\no \entrp \right|^{\uhk}_{\uhkn}  -\int_{\uhkn}^{\uhk} \fst(\xi)\cdot\no \entrpp(\xi)   \dxr \xi\right\} \vp \ds. \label{eq:tempIntByPart}
\end{align}
After rewriting the second surface integral over $\belemx$ in~\eqref{eq:entropyFormTemp} to include contributions from both sides of the interface, and noting
\begin{equation}
	\fhat (\uhk,\uhkn; \no ) ( \entrp(\uhk) -\entrp(\uhkn))\vp =   \int_{\uhkn}^{\uhk} (\fhat (\uhk,\uhkn; \no )\entrpp(\xi)\vp \dxr \xi , \notag
\end{equation}
we can substitute expression~\eqref{eq:tempIntByPart} into~\eqref{eq:entropyFormTemp} to  obtain
\begin{align}
	\label{eq:DGentrTestFinal}
	& \semilinDG{\uh}{\entrp \vp} =  -\int_{\mathbb{R}_T^d} \qst(\uh)  \cdot \nabla\vp  \dx + \int_{\Rnn{N}}\entr(\uhm)\vp\dxb- \int_{\mathbb{R}^{d}}\entr(\uu_0)\vp\dxb   \\ 
	& \qquad\qquad  +\sum_{n=0}^{N-1} \int_{\Rn} \left( \entr(\uhm)-\entr(\uhp) + (\uhp - \uhm) \entrp (\uhp)\right) \vp\dxb \notag\\
	& \qquad\qquad+ \sumnk \z \intdkx \int_{\uhk}^{\uhkn} \left( \fst(\xi)\cdot \no  -\fhat  \right) \entrpp(\xi)  \vp   \dxr \xi \ds.\notag
\end{align}
For later use, we note that, for any convex entropy, and non-negative  function $\vp$,
\begin{equation}
	\label{eq:nonnegativeEntr1}
	 \left( \entr(\uhm)-\entr(\uhp) + (\uhp - \uhm) \entrp (\uhp)\right) \vp \geq 0.
\end{equation}
Furthermore, by assumption, $\fhat$ is strictly monotone, which implies the E-flux property in the sense of Osher~\cite{osher:1984eScheme}, and so 
\begin{equation}
	\label{eq:nonnegativeEntr2}
	 \int_{\uhk}^{\uhkn} \left( \fst(\xi)\cdot \no  -\fhat (\uhk,\uhkn;\no) \right) \entrpp(\xi)  \vp  \dxr \xi \geq 0 .
\end{equation}

\section{Stability}
\label{sec:quadratic}


First, we  state a discrete  $\Ltwo$-stability result. Use $\entr= \frac{1}{2} \uh^2$ and $\vp \equiv1$ in~\eqref{eq:masterEqDG}, and observe that for this choice $\entrp(\uh)\in \Vsp$. Consequently the second term in~\eqref{eq:masterEqDG} vanishes. From~\eqref{eq:DGspacetimeS} and~\eqref{eq:DGentrTestFinal}, we obtain 
%
\begin{eqnarray}
			&&\sumnk \left\{   \intk  \epsvisc(\uh)\left| \nabla \uh\right|^2\dx +   \z\intdkx\int_{\uhk}^{\uhkn} (  \fst(\xi) \cdot \no-\fhat (\uhk,\uhkn; \no)) d\xi \, \ds\right\}  \notag \\ &&\qquad\qquad\qquad\qquad +\sum_{n=0}^{N-1} \z\int_{\Rn} (\uhp -\uhm)^2 \dxb  + \z\int_{\Rnn{N}} (\uhm)^2 \dxb= \z\int_{\mathbb{R}^\dimx} \uu^2_0 \dxb.\label{eq:stabResult}
\end{eqnarray}
Therefore, noting~\eqref{eq:nonnegativeEntr2}, clearly we have discrete $\Ltwo$-stability in the sense
\begin{equation}
	\normgen{\uh(\cdot , t_N^-)}_{\Ltwo(\mathbb{R}^{\dimx})}  \leq  \normgen{\uu_0}_{\Ltwo(\mathbb{R}^{\dimx})}  , \qquad N=1,2,\dots.
\end{equation}
At the interfaces $\Rn$, equation~\eqref{eq:stabResult} explicitly states an $\Ltwo$-bound on the jumps of the solution. For a strictly monotone numerical flux, a bound on the interface jumps of the solution $\uh$  at $\belemx$ is also implied~\cite{Cockburn1996Error-Estimates}, i.e. there exists a constant $C> 0$ such that 
\begin{equation}
	\label{eq:jumpBound}
	\intdkx\int_{\uhk}^{\uhkn} (  \fst(\xi) \cdot \no-\fhat (\uhk,\uhkn; \no)) \dxr \xi \, \ds \geq  C \intdkx ( \uhk -\uhkn)^2\ds. 
\end{equation}

Now take $\entr= \frac{1}{ \altpoldeg} \uh^\altpoldeg$, with an \emph{even} integer $\altpoldeg\geq 4$, and $\phi \equiv 1$ in~\eqref{eq:masterEqDG}. Using~\eqref{eq:DGentrTestFinal}, and the inequalities~\eqref{eq:nonnegativeEntr1},~\eqref{eq:nonnegativeEntr2}, we obtain immediately
%
\begin{align}
	\label{eq:LinfDiscrStart}
	&\frac{1}{\altpoldeg} \normgen{\uh(t^-_N,\cdot)}_{\Lp{\altpoldeg}(\mathbb{R}^\dimx)}^\altpoldeg +  \semilinS{\uh}{ \uh^{\altpoldeg-1}}+ \semilinDG {\uh}{ \projh (\uh^{\altpoldeg-1}) -\uh^{\altpoldeg-1}} \leq \frac{1}{\altpoldeg}  \normgen{\uu_0}_{\Lp{\altpoldeg}(\mathbb{R}^\dimx)}^\altpoldeg  . 
\end{align}
 %
The term involving   the projection error can be written
\begin{align}
	\semilinDG{\uh}{\projh (\uh^{\altpoldeg-1}) -\uh^{\altpoldeg-1}} &= \sumnk\left\{ \intk \nabla \cdot \fst (\uh) (\projh (\uh^{\altpoldeg-1}) -\uh^{\altpoldeg-1})\dx\right. \notag\\
	& \quad\quad + \left. \int_{\partial\telem} (\fhat -\fst(\uhk)\cdot\no ) (\projh ((\uhk)^{\altpoldeg-1}) -(\uhk)^{\altpoldeg-1}) \ds \right\} . \notag
\end{align}
The $\Hm{1}$-projection~\eqref{eq:H1proj} is a simple Neumann problem. For a smooth enough function $\vv$, we have the standard error bound $\normgen{\vv-\projh\vv}_{\Linf(\omega)}\leq C \h^{\poldeg+1}\normgen{\vv}_{\Wmp{\poldeg+1}{\infty}(\telem)}$ for $\omega=\telem$ or  $\omega=\partial\telem$. Setting $\vv=w^{\altpoldeg-1}$, where $w\in \Vsp$,  we can further rewrite the projection error  (cf.~\cite{szepessy:1991convSDwBdy})
\begin{equation}
	\label{eq:errorInfInt}
		\normgen{\projh (w^{\altpoldeg-1}) -w^{\altpoldeg-1}}_{\Linf(\telem)} \leq C  \h^{2}  \altpoldeg^{\poldeg+1} \normgen{w}^{p-3}_{\Linf(\telem)}\normgen{\nabla w}^2_{\Linf(\telem)}. 
\end{equation}
%

%
%
Using Lipschitz-continuity of the numerical flux, and the definition of the artificial viscosity~\eqref{eq:shockcapt}, we thus obtain
\begin{equation}
		|\semilinDG{\uh}{\projh (\uh^{\altpoldeg-1}) -\uh^{\altpoldeg-1}} |\leq C  \h^{2-\beta}  \altpoldeg^{\poldeg+1}  \sumnk \intk \epsvisc(\uh)\normgen{\uh}^{p-3}_{\Linf(\telem)}\normgen{\nabla \uh}^2_{\Linf(\telem)} \dx  . \notag
\end{equation}
Split the triangulation into  $\triagnn{>}:=\{\telem\in \triag : \, \normgen{\uh}_{\Linf(\telem)}>1\}$ and $\triagnn{<}:=\{\telem\in \triag : \, \normgen{\uh}_{\Linf(\telem)}<1\}$ to obtain
\begin{eqnarray}
	&& |\semilinDG{\uh}{\projh (\uh^{\altpoldeg-1}) -\uh^{\altpoldeg-1}} | \leq  \notag \\&& \quad C  \h^{2-\beta}   \altpoldeg^{\poldeg+1} \left\{ \sum_{\telem\in\triagnn{>}} \intk \epsvisc(\uh)\normgen{\uh}^{p-2}_{\Linf(\telem)}\normgen{\nabla \uh}^2_{\Linf(\telem)} \dx 
	+  \sum_{\telem\in\triagnn{<}} \intk \epsvisc(\uh) |\nabla \uh|^2 \dx  \right\}\notag
\end{eqnarray} 
where the standard estimate
\begin{equation}
	\label{eq:LinfLtwoInverse}
	|\telem| \normgen{\nabla \uh}^2_{\Linf(\telem)} \leq  \normgen{\nabla \uh}^2_{\Ltwo(\telem)}
\end{equation}
has been used. To proceed, we need the following lemma: 
\begin{lem} \label{Lem::SC_Coerc}
For a shape regular triangulation  $ \mathcal{T}_h = \{ \kappa \}	$, there is a $\h$-independent constant $\hat{C}(q, p)$ such that $\forall \kappa \in  \mathcal{T}_h$ 
\begin{equation}
	\intk \normgen{\vv}^{p-2}_{\Linf(\telem)} |\nabla \vv|^2\dx  \leq \hat{C}(q, p) \intk \nabla\vv \cdot \nabla (\projh(\vv^{\altpoldeg-1})) \dx, \qquad v\in \polp(\telem), \altpoldeg = 2, 4, \cdots . 
\end{equation}
\end{lem}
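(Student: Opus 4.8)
The plan is to first use the defining property of the $\Hm1$-projection to replace the right-hand side by an explicit weighted Dirichlet energy, and then to prove the resulting elementwise inequality by a scaling argument onto a fixed reference simplex combined with a compactness argument. \emph{Reduction of the right-hand side.} Since $\vv\in\polp(\telem)$, I would use $\vv$ itself as the test function in the projection identity~\eqref{eq:H1proj}, which gives
\[
\intk \nabla\vv\cdot\nabla\bigl(\projh(\vv^{\altpoldeg-1})\bigr)\dx
= \intk \nabla\vv\cdot\nabla(\vv^{\altpoldeg-1})\dx
= (\altpoldeg-1)\intk \vv^{\altpoldeg-2}\,|\nabla\vv|^2\dx .
\]
As $\altpoldeg$ is even, $\vv^{\altpoldeg-2}=|\vv|^{\altpoldeg-2}\ge 0$, so the assertion is equivalent to finding a constant $C(\poldeg,\altpoldeg)$ with
\[
\normgen{\vv}^{\altpoldeg-2}_{\Linf(\telem)}\intk|\nabla\vv|^2\dx
\;\le\; C(\poldeg,\altpoldeg)\intk|\vv|^{\altpoldeg-2}\,|\nabla\vv|^2\dx ,
\]
after which one takes $\hat C=C/(\altpoldeg-1)$.

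\emph{Reduction to a reference element.} I would pull this inequality back to a fixed reference simplex $\telemref$ through the affine map $F:\telemref\to\telem$, $x=B\hat x+b$. Writing $\hat\vv=\vv\circ F$, the $\Linf$-norm is invariant, the gradients transform by $\nabla\vv=B^{-T}\hat\nabla\hat\vv$, and the factor $|\det B|$ coming from $\dx=|\det B|\,\mathrm d\hat x$ cancels on both sides. Shape regularity~\eqref{eq:shapereg2} bounds the singular values of $B^{-T}$ above and below by fixed multiples of $\hk^{-1}$, so $|B^{-T}\hat\nabla\hat\vv|^2$ is equivalent to $\hk^{-2}|\hat\nabla\hat\vv|^2$ with $\h$-independent constants. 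Hence the inequality on $\telem$ follows, with an $\h$-independent constant, from the reference inequality
\[
\normgen{\hat\vv}^{\altpoldeg-2}_{\Linf(\telemref)}\int_{\telemref}|\hat\nabla\hat\vv|^2\,\mathrm d\hat x
\;\le\; \tilde C(\poldeg,\altpoldeg)\int_{\telemref}|\hat\vv|^{\altpoldeg-2}\,|\hat\nabla\hat\vv|^2\,\mathrm d\hat x ,
\qquad \hat\vv\in\polp(\telemref).
\]

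\emph{Compactness on the reference element.} Both sides are homogeneous of degree $\altpoldeg$ in $\hat\vv$, so I may restrict to the compact unit sphere $S=\{\hat\vv\in\polp(\telemref):\normgen{\hat\vv}_{\Linf(\telemref)}=1\}$ and show that the quotient $N(\hat\vv)/D(\hat\vv)$ is bounded on $S$, where $N(\hat\vv)=\int_{\telemref}|\hat\nabla\hat\vv|^2\,\mathrm d\hat x$ and $D(\hat\vv)=\int_{\telemref}|\hat\vv|^{\altpoldeg-2}|\hat\nabla\hat\vv|^2\,\mathrm d\hat x$. Arguing by contradiction, a sequence with unbounded quotient has, by finite dimensionality, a subsequence converging in $\polp(\telemref)$ to some $\hat\vv_\ast\in S$; if $D(\hat\vv_\ast)>0$, continuity of $N$ and $D$ bounds the quotient along the sequence, a contradiction.

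The main obstacle is the degenerate case $D(\hat\vv_\ast)=0$. Since the integrand is non-negative, this forces $\hat\nabla\hat\vv_\ast=0$ away from the measure-zero zero set of $\hat\vv_\ast$, so $\hat\vv_\ast$ is a nonzero constant (recall $\normgen{\hat\vv_\ast}_{\Linf(\telemref)}=1$); but then $N(\hat\vv_\ast)=0$ as well, so the quotient is genuinely indeterminate and continuity alone does not suffice. I would close this gap with a local lower bound: writing the convergent elements as $\hat\vv_k=\hat\vv_\ast+w_k$ with $w_k\to0$ uniformly, the weight $|\hat\vv_k|^{\altpoldeg-2}=|\hat\vv_\ast+w_k|^{\altpoldeg-2}\to1$ uniformly, while $\hat\nabla\hat\vv_k=\hat\nabla w_k$ since $\hat\vv_\ast$ is constant; hence $D(\hat\vv_k)\ge\z N(\hat\vv_k)$ for $k$ large, so the quotient is at most $2$ along the sequence, contradicting its divergence. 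This produces a finite $\tilde C(\poldeg,\altpoldeg)$, and hence the claimed $\hat C(\poldeg,\altpoldeg)$.
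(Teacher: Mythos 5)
Your proof is correct, but after the common first step it takes a genuinely different route from the paper's. Both arguments begin identically: testing the $\Hm{1}$-projection identity~\eqref{eq:H1proj} with $\vv$ itself to replace the right-hand side by $(\altpoldeg-1)\intk \vv^{\altpoldeg-2}|\nabla\vv|^2\dx$. From there the paper stays on the physical element and proves the lower bound $\lpnorm{\vv^{\altpoldeg-2}|\nabla\vv|^2}{1}{\telem}\geq C\,\lpnorm{|\nabla\vv|^2}{1}{\telem}\,\lpnorm{\vv^{\altpoldeg-2}}{\infty}{\telem}$ by combining the Kneser inequality for products of polynomials (extended to multivariate polynomials on convex sets via a line-segment argument) with the $\Lp{1}$--$\Linf$ norm equivalence on finite-dimensional polynomial spaces, which yields the explicit constant $K_{\altpoldeg\poldeg-2}\,\hat C(\polsp^{\altpoldeg\poldeg-2})/(\altpoldeg-1)$ with $K_n\lesssim 3.3^n$. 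You instead pull back to a reference simplex (using shape regularity~\eqref{eq:shapereg2} to control the singular values of the affine map, so that the factors $|B^{-T}\hat\nabla\hat\vv|^2$ appearing on both sides are comparable with $\h$-independent constants), exploit the degree-$\altpoldeg$ homogeneity to restrict to the $\Linf$-unit sphere, and argue by compactness; the crucial point is your handling of the degenerate case in which the limit polynomial is a nonzero constant, where numerator and denominator both vanish and plain continuity fails, and your uniform lower bound $|\hat\vv_k|^{\altpoldeg-2}\geq\z$ for large $k$ closes that gap correctly. The trade-off: your argument is more elementary and self-contained (no polynomial-factorization inequality is needed), but it is non-constructive --- compactness gives no information on how $\hat C(\poldeg,\altpoldeg)$ grows in $\altpoldeg$ and $\poldeg$. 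That quantitative information is not idle in this paper: the constant enters the constraint~\eqref{eq:hconstraint} in section~\ref{sec:quadratic}, in a regime where $\h\to 0$ and $\altpoldeg\to\infty$ simultaneously, so the paper's explicit growth rate allows one to track the coupling between $\h$ and $\altpoldeg$, whereas with a compactness constant one can only extract an unquantified diagonal sequence $\altpoldeg(\h)\to\infty$.
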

The proof is given in the appendix. 
This lemma was proved in~\cite{Szepessy1989Convergence-of-} for linear elements. In~\cite{Henke2013LL-boundedness-} the authors give a proof for $\vv\in \polp(\telem)$ and $\projh$ a  Lagrangian interpolation operator. In the appendix we present a novel proof for the lemma, which is simpler and applicable to the $\Hm{1}$-projection~\eqref{eq:H1proj}.
Using Lemma \ref{Lem::SC_Coerc},  $\Ltwo$-stability~\eqref{eq:stabResult}, and ~\eqref{eq:LinfLtwoInverse}, there follows
\begin{align}
	 |\semilinDG{\uh}{\projh (\uh^{\altpoldeg-1}) -\uh^{\altpoldeg-1}} | &  \leq  
	 C  \h^{2-\beta} \altpoldeg^{\poldeg+1}
	  \nonumber \\
	 & \quad+  \hat{C}(q, p)  \h^{2-\beta} \altpoldeg^{\poldeg+1} \sumnk \intk\eps(\uh) \nabla \uh\cdot \nabla (\projh(\uh^{\altpoldeg-1}))\dx.
\end{align}
Finally, substituting this result into~\eqref{eq:LinfDiscrStart}, we have
\begin{equation}
	\normgen{\uh(t^-_N,\cdot)}^\altpoldeg_{\Lp{\altpoldeg}(\mathbb{R}^\dimx)} + \altpoldeg (1 -\hat{C}(q, p) \h^{2-\beta} \altpoldeg^{\poldeg+1})\semilinS{\uh}{ \projh(\uh^{\altpoldeg-1})} \leq \normgen{\uu_0}^\altpoldeg_{\Lp{\altpoldeg}(\mathbb{R}^\dimx)}  + C \h^{2-\beta}\altpoldeg^{\poldeg+2}.\notag
\end{equation}
Clearly, in order to keep the $\Lp{\altpoldeg}$ norm bounded or all $t_n$,  $n=1,\dots, N$, it is  required that
\begin{align}
 C h^{2-\beta} p^{q+2} &\leq  \infty,\notag\\
 p - \hat{C}(q, p) h^{2-\beta} p^{q+2} &\geq 0. \notag
\end{align}

From these conditions we get
\begin{equation}
h^{2-\beta} \lesssim  \min \{  \dfrac{1}{p^{q+2}}, \dfrac{1}{ \hat{C}(q, p) p^{q+1} } \},\label{eq:hconstraint}
\end{equation}
and note that as $\h \to 0$ this will still permit us  to let $\altpoldeg \to \infty$ eventually.

It is worth pointing out that so far the presence  of streamline-diffusion stabilization (or lack thereof) is clearly not essential. In previous proofs~\cite{Szepessy1989Convergence-of-,szepessy:1991convSDwBdy,Henke2013LL-boundedness-}, the control over the residual provided by the streamline-diffusion stabilization becomes significant  when extending the discrete stability to a uniform bound. We now establish a uniform bound in $\Linf$ without using streamline-diffusion stabilization:

\begin{prop}
Let $\uh$ be the solution produced by scheme~\eqref{eq:DGfinalScheme}. There exists a constant $C>0$ such that
\begin{equation}
	\normgen{\uh}_{\Linf([0,T]\times \mathbb{R}^\dimx)} \leq C 
\end{equation}
\end{prop}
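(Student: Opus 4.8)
The plan is to convert the whole family of $\Lp{\altpoldeg}$-stability estimates, indexed by the even auxiliary exponent $\altpoldeg$, into a single $\Linf$ bound by letting $\altpoldeg\to\infty$ \emph{together with} $\h\to0$, using a polynomial inverse inequality to absorb the negative power of $\h$ that this produces. First I would record the consequence of the constraint~\eqref{eq:hconstraint}: it is designed precisely so that the coefficient $\altpoldeg\,(1-\hat{C}(\poldeg,\altpoldeg)\,\h^{2-\viscexp}\altpoldeg^{\poldeg+1})$ multiplying the non-negative term $\semilinS{\uh}{\projh(\uh^{\altpoldeg-1})}$ stays non-negative, and so that the remainder $C\h^{2-\viscexp}\altpoldeg^{\poldeg+2}$ is bounded by a constant independent of $\h$ and $\altpoldeg$. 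Dropping the shock-capturing term then leaves, for every admissible even $\altpoldeg\geq4$ and every time level $t_N$,
\[
   \normgen{\uh(t^-_N,\cdot)}^{\altpoldeg}_{\Lp{\altpoldeg}(\mathbb{R}^\dimx)}\;\leq\;\normgen{\uu_0}^{\altpoldeg}_{\Lp{\altpoldeg}(\mathbb{R}^\dimx)}+C.
\]
Taking $\altpoldeg$-th roots, using $(a+b)^{1/\altpoldeg}\leq a^{1/\altpoldeg}+b^{1/\altpoldeg}$ together with the elementary bounds $\normgen{\uu_0}_{\Lp{\altpoldeg}}\leq\max(1,|\mathrm{supp}\,\uu_0|)\,\normgen{\uu_0}_{\Linf}$ and $C^{1/\altpoldeg}\leq\max(1,C)$, I obtain a time-slice bound $\normgen{\uh(t^-_N,\cdot)}_{\Lp{\altpoldeg}(\mathbb{R}^\dimx)}\leq K$ with $K$ independent of $\altpoldeg$, $\h$ and $N$.

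Next I would apply a scaled inverse inequality on the shape-regular elements: for polynomials of degree $\poldeg$ there is a constant $C(\poldeg)$, independent of $\h$ and $\altpoldeg$, with $\normgen{\uh}_{\Linf(\telem)}\leq C(\poldeg)\,\h^{-\dimx/\altpoldeg}\normgen{\uh}_{\Lp{\altpoldeg}(\telem)}$. Summed over the trace of the mesh on a time slice and maximized over elements, this gives $\normgen{\uh(t^-_N,\cdot)}_{\Linf(\mathbb{R}^\dimx)}\leq C(\poldeg)\,\h^{-\dimx/\altpoldeg}K$. It then remains to choose $\altpoldeg=\altpoldeg(\h)$ as large as~\eqref{eq:hconstraint} allows; this admits $\altpoldeg(\h)\sim\h^{-\alpha}$ for some $\alpha>0$ (a positive power precisely because $\viscexp<2$), whence
\[
   \h^{-\dimx/\altpoldeg(\h)}=\exp\!\big(\dimx\,\h^{\alpha}\ln(1/\h)\big)\longrightarrow 1\quad\text{as }\h\to0,
\]
so the right-hand side stays bounded and the uniform $\Linf$ bound on the time slices follows.

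The heart of the matter, and the main obstacle, is the coupling of the two limits. Neither iterated limit works: at fixed $\h$ the exponent $\altpoldeg$ cannot be sent to infinity because~\eqref{eq:hconstraint} caps it, while at fixed $\altpoldeg$ the factor $\h^{-\dimx/\altpoldeg}$ blows up as $\h\to0$. The argument succeeds only because $\altpoldeg$ is permitted to grow like a \emph{positive} power of $1/\h$, which dominates the merely logarithmic loss $\ln(1/\h)$ in the exponent; this positivity is exactly the condition $\viscexp<2$, so the standing assumption $\z<\viscexp<2$ is used in an essential way. A secondary technical point is that the stability estimate is naturally posed on the time slices $\Rn$ rather than on the open slabs, so controlling the slab interiors requires an additional inverse-inequality argument applied on the space--time elements themselves (replacing $\dimx$ by $\dimx+1$, with the interior element data controlled through the space--time dissipation furnished by~\eqref{eq:stabResult}); since $\h^{-(\dimx+1)/\altpoldeg(\h)}\to1$ as well, this does not change the conclusion.
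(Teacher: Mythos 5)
Your derivation of the slice bound $\normgen{\uh(t_n^-,\cdot)}_{\Lp{\altpoldeg}(\mathbb{R}^\dimx)}\leq K$ uniformly in $\altpoldeg$, $\h$, $n$, and your idea of coupling $\altpoldeg\to\infty$ to $\h\to0$ through a scaled inverse estimate, reproduce the paper's first and last steps. The genuine gap is the part you label a ``secondary technical point'': controlling $\uh$ \emph{inside} the space-time slabs, which is in fact the bulk of the paper's proof. The discrete stability gives information only on the slices $\Rn$. A space-time simplex in a slab need not intersect either bounding slice in a set of positive $\dimx$-dimensional measure, and even for an element with a facet on a slice, no inverse inequality can bound a polynomial on the $(\dimx+1)$-dimensional element by its trace on a $\dimx$-dimensional facet: a polynomial proportional to the barycentric coordinate vanishing on that facet has zero trace there but arbitrarily large interior values. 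Nor can the ``space-time dissipation'' of~\eqref{eq:stabResult} substitute, since it only controls $\sumnk\intk\epsvisc(\uh)|\nabla\uh|^2\dx$ and $L^2$-jumps, and the weight $\epsvisc(\uh)$ defined in~\eqref{eq:shockcapt} is of size $\h^\viscexp$ times solution-dependent averages (it vanishes for locally constant $\uh$), so no unweighted control of $\nabla\uh$ --- let alone of $\intk\uh^\altpoldeg\dx$ --- follows from it. The paper closes exactly this gap at the $\Lp{\altpoldeg}$ level before any inverse estimate is invoked: the divergence identity~\eqref{eq:divergenceLp} expresses $\int_{\mathbb{R}^\dimx}\uh(t,\cdot)^\altpoldeg\dxb$ for arbitrary $t$ in the slab through the slice value at $t_n^+$ plus flux-divergence and interface terms; those terms are bounded by $C\h^{-1}$ times the space-time $\Lp{\altpoldeg}$ integral using trace and inverse inequalities (estimates~\eqref{eq:LpestimateSurfTerm},~\eqref{eq:LpestimateDivTerm}), and since the temporal thickness of a slab is of order $\h$, Gronwall's inequality produces an $\h$-independent factor. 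Without an argument of this kind your proof simply does not see $\uh$ off the slices.

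A secondary error: your claim that~\eqref{eq:hconstraint} permits $\altpoldeg(\h)\sim\h^{-\alpha}$ is false, and so is the assertion that the proof ``succeeds only because'' of such power-law growth. The constant $\hat{C}(\poldeg,\altpoldeg)$ of Lemma~\ref{Lem::SC_Coerc} contains Kneser's constant $K_{\altpoldeg\poldeg-2}\approx 3.3^{\altpoldeg\poldeg}$, which grows \emph{exponentially} in $\altpoldeg$, so~\eqref{eq:hconstraint} only allows $\altpoldeg\lesssim\ln(1/\h)$. Fortunately that is enough: with $\altpoldeg=c\ln(1/\h)$ the loss $\h^{-(\dimx+1)/\altpoldeg}=\exp\bigl((\dimx+1)\ln(1/\h)/\altpoldeg\bigr)$ is bounded by $e^{(\dimx+1)/c}$ (it does not tend to $1$, but boundedness is all the Proposition needs). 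So your conclusion survives, but the quantitative mechanism you describe --- power growth of $\altpoldeg$ beating a logarithm --- is neither available under~\eqref{eq:hconstraint} nor needed.
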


\begin{proof}
Fix $t \in [t_{n}, t_{n+1}]$ for some $n=0,\dots , N-1$. For $\telem\in\triagnn{n}$ we define 
  $\telem^>\subset \telem$ as the subset of $\telem$ with first coordinate $x_0\equiv t\in [t_n,t]$, and analogously we define $\belemx^>\subset \belemx$. 

Using the divergence theorem the following identity is easily established:
\begin{align}
	\label{eq:divergenceLp}
	\int_{\mathbb{R}^\dimx} \entr(\uh( t,\cdot)) \dxb&= \int_{\mathbb{R}^\dimx}\entr( \uh( t_{n}^+,\cdot))\dxb \\ &\quad +\sum_{\telem\in\triagnn{n}} \int_{\telem^>}  \nabla\cdot \qst (\uh) \dx   
	   -\sum_{\telem\in\triagnn{n}} \int_{\belemx^>} \qst(\uhk)\cdot \no\ds . \notag 
 \end{align}
We consider the  entropy $\eta  = \frac{1}{\altpoldeg} \uh^\altpoldeg$ for \emph{even} $\altpoldeg\geq 4$. If we re-write the surface integral over $\belemx$ to include contributions from both sides of the interface, the last term on the right hand side is equal to
\begin{eqnarray}
	\frac{1}{2} \sum_{\telem\in\triagnn{n}} \int_{\belemx^>} (\qst(\uhkn)-\qst(\uhk))\cdot \no\ds &=& \frac{1}{2} \sum_{\telem\in\triagnn{n}} \int_{\belemx^>}\int_{\uhk}^{\uhkn} \qst'(\xi)\cdot \no d\xi\ds . \notag
\end{eqnarray}
Using the pointwise estimate
\begin{equation}
	\qst' \cdot \no \leq \left( \sum_{i=0}^d \fst'_i \right)^{\frac{1}{2}} \left(   \sum_{i=0}^d(\entrp \no_i)^2\right)^{\frac{1}{2}} \leq \normgen{\fst'}_\infty |\entrp| , \notag
\end{equation}
we can bound 
\begin{equation}
	\int_{\uhk}^{\uhkn} \qst'(\xi)\cdot \no d\xi \leq C\int_{\uhk}^{\uhkn}  \left| \entrp(\xi)\right| d\xi \leq   \frac{C}{\altpoldeg}\left\{( \uhkn)^\altpoldeg +( \uhk)^\altpoldeg  \right\}. 
\end{equation}
Consequently, re-distributing the surface integrals to include contributions only from the interior face, one has
\begin{eqnarray}
	\frac{1}{2} \sum_{\telem\in\triagnn{n}} \int_{\belemx^>} (\qst(\uhkn)-\qst(\uhk))\cdot \no\ds  	&\leq &   \frac{C}{\altpoldeg}   \sum_{\telem\in\triagnn{n}}  \int_{\belemx^>} ( \uhk)^\altpoldeg  \ds \nonumber\\  &\leq &\frac{C}{\h} \frac{1}{\altpoldeg}  \sum_{\telem\in\triagnn{n}}\int_{\telem^>}\uh^\altpoldeg dx, \label{eq:LpestimateSurfTerm} 
\end{eqnarray}
where a trace inverse inequality  has been used.

We now consider the divergence term in~\eqref{eq:divergenceLp}. To that end, we first note the pointwise estimate
\begin{equation}
	\nabla\cdot \qst \leq \left ( \sum_{i=0}^d (\fst'_i)^2 \right)^{\frac{1}{2}} \left (\sum_{i=0}^d ( \uh^{\altpoldeg-1}\partial_{x_i} \uh)^2 \right)^{\frac{1}{2}} \leq  \frac{1}{\altpoldeg} \normgen{\fst'}_\infty  |\nabla (\uh^\altpoldeg)| , \notag
\end{equation}
whence
\begin{eqnarray}
	\sum_{\telem\in\triagnn{n}} \int_{\telem^>}  \nabla\cdot \qst (\uh) \dx 
	&\leq & \frac{C}{\altpoldeg}  \sum_{\telem\in\triagnn{n}} \intk \left| \nabla (\uh^{\altpoldeg})\right| \dx \notag \\ 
	&\leq & \frac{C}{\h\altpoldeg}  \sum_{\telem\in\triagnn{n}} \intk  \uh^{\altpoldeg} \dx. \label{eq:LpestimateDivTerm} 
\end{eqnarray}
Substituting~\eqref{eq:LpestimateSurfTerm} and~\eqref{eq:LpestimateDivTerm} into~\eqref{eq:divergenceLp}, we see that for some positive constants $C_1$, $C_2$, we have
\begin{equation}
	 \int_{\mathbb{R}^d} {\uh(t,\xb)}^\altpoldeg\dxb \leq\frac{C_1}{\h}\int_{t_n}^t \int_{\mathbb{R}^d} \uh(\tau,\xb)^\altpoldeg \dx d\tau + C_2. \notag
\end{equation}
Since we already have discrete stability $\normgen{\uh(t_n^-,\cdot)}_{\Lp{\altpoldeg}(\mathbb{R}^d)} < \infty$ for $t_n$ for $n=0,1,\dots, N$, a simple Gronwall argument allows us to conclude
\begin{equation}
	\sup_{0\leq t\leq t_N}\normgen{\uh(t,\cdot)}_{\Lp{\altpoldeg}(\mathbb{R}^d)} \leq C, \notag
\end{equation}
%
for all $\h$ and $\altpoldeg$  satisfying~\eqref{eq:hconstraint}. Using an inverse estimate, the rest of the proof can be completed as in~\cite{Szepessy1989Convergence-of-,Henke2013LL-boundedness-}. 
\end{proof}



\section{Entropy Consistency}
\label{sec:entropy}
%
%
%


We must  prove the weak consistency of scheme~\eqref{eq:DGfinalScheme} with all entropy inequalities. This is formally stated  in the following theorem:
\begin{thrm}
	\label{eq:entropyCons}
	Let $\uh$ be the solution produced by scheme~\eqref{eq:DGfinalScheme}. Then, for all convex entropies, and associated space-time entropy fluxes~\eqref{eq:spacetimeEntrRel}, there holds
	\begin{equation}
		\label{eq:entropyIneq}
		\liminf_{\h\to 0}\left\{  \int_{(0,T)\times\mathbb{R}^\dimx }\qst(\uh)\cdot\nabla \vp\dx\right\} \geq 0 ,\qquad \forall\vp\in\mathcal C^\infty_0((0,T)\times\mathbb{R}^d ;\mathbb{R^+}).
	\end{equation}
\end{thrm}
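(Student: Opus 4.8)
The plan is to feed the scheme the canonical entropy test function $\vv=\projh(\entrp\vp)$ and combine the master equation~\eqref{eq:masterEqDG} with the rewritten form~\eqref{eq:DGentrTestFinal}. Since $\vp\in\mathcal C_0^\infty((0,T)\times\mathbb R^d;\mathbb R^+)$ is compactly supported in the \emph{open} time interval, it vanishes at $t=0$ and at $t=t_N=T$, so the two time-boundary contributions in~\eqref{eq:DGentrTestFinal} (the term on $\Rnn{N}$ and the initial term) are identically zero. Solving for the flux integral then gives
\begin{align}
\int_{(0,T)\times\mathbb{R}^\dimx}\qst(\uh)\cdot\nabla\vp\dx
&= \semilinDG{\uh}{\projh(\entrp\vp)-\entrp\vp} + \semilinS{\uh}{\entrp\vp} \notag\\
&\quad + \sum_{n=0}^{N-1}\int_{\Rn}\left(\entr(\uhm)-\entr(\uhp)+(\uhp-\uhm)\entrp(\uhp)\right)\vp\dxb \notag\\
&\quad + \sumnk\z\intdkx\int_{\uhk}^{\uhkn}\left(\fst(\xi)\cdot\no-\fhat\right)\entrpp(\xi)\vp\dxr\xi\ds. \notag
\end{align}
By~\eqref{eq:nonnegativeEntr1} and~\eqref{eq:nonnegativeEntr2} the last two sums are non-negative, so it suffices to show that the two \emph{error} terms, namely the projection-error term $\semilinDG{\uh}{\projh(\entrp\vp)-\entrp\vp}$ and the shock-capturing term $\semilinS{\uh}{\entrp\vp}$, each have their negative part tending to zero as $\h\to0$; then taking $\liminf$ yields~\eqref{eq:entropyIneq}.

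For the shock-capturing term I would expand $\nabla(\entrp(\uh)\vp)=\entrpp(\uh)\nabla\uh\,\vp+\entrp(\uh)\nabla\vp$, so that $\semilinS{\uh}{\entrp\vp}$ splits into $\sumnk\intk\epsvisc(\uh)\entrpp(\uh)|\nabla\uh|^2\vp\dx$, which is non-negative (convexity of $\entr$, $\vp\ge0$, $\epsvisc\ge0$) and may be discarded, plus a cross term. Bounding $\entrp(\uh)$ by the uniform $\Linf$ bound established above and applying Cauchy--Schwarz across the elements gives
\begin{equation}
\left|\sumnk\intk\epsvisc(\uh)\entrp(\uh)\nabla\uh\cdot\nabla\vp\dx\right|
\le C\left(\sumnk\epsvisc(\uh)|\telem|\right)^{\z}\left(\sumnk\intk\epsvisc(\uh)|\nabla\uh|^2\dx\right)^{\z}, \notag
\end{equation}
where the second factor is controlled by the $\Ltwo$-stability~\eqref{eq:stabResult}. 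It then remains to prove that the \emph{total viscosity} $\sumnk\epsvisc(\uh)|\telem|=\h^\viscexp\sumnk\bigl(\intk|\nabla\cdot\fst(\uh)|\dx+\intdk|\uhk-\uhkn|\ds\bigr)$ tends to zero.

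This is the main obstacle, and I expect it to split into a jump part and a residual part. For the jump part, a face-wise Cauchy--Schwarz together with the $\Ltwo$ jump bound~\eqref{eq:jumpBound},~\eqref{eq:stabResult} and the shape-regularity counting estimate $\sumnk|\partial\telem|\le C\h^{-1}$ (from~\eqref{eq:shapereg2}) yields a contribution of order $\h^{\viscexp-\z}$, which vanishes precisely because $\viscexp>\z$. The residual part is the delicate one: using~\eqref{eq:resEstm} and Cauchy--Schwarz, $\intk|\nabla\cdot\fst(\uh)|\dx\le C|\telem|^{\z}\normgen{\nabla\uh}_{\Ltwo(\telem)}$, and since this same residual sits in the numerator of $\epsvisc$ one has $\epsvisc(\uh)\ge\h^\viscexp|\telem|^{-1}\intk|\nabla\cdot\fst(\uh)|\dx$; feeding these two facts into the dissipation bound $\sumnk\intk\epsvisc(\uh)|\nabla\uh|^2\dx\le C$ gives $\sumnk|\telem|^{-2}\bigl(\intk|\nabla\cdot\fst(\uh)|\dx\bigr)^3\le C\h^{-\viscexp}$. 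A H\"older inequality with exponents $(3,\tfrac32)$ over the elements, using $\sumnk|\telem|<\infty$, then produces $\sumnk\intk|\nabla\cdot\fst(\uh)|\dx\le C\h^{-\viscexp/3}$, so the residual contribution to the total viscosity is $O(\h^{2\viscexp/3})\to0$. Hence $\sumnk\epsvisc(\uh)|\telem|\to0$ and the cross term vanishes.

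Finally, for the projection-error term I would bound the face mismatch by Lipschitz continuity and consistency of $\fhat$, $|\fhat-\fst(\uhk)\cdot\no|\le C|\uhk-\uhkn|$, and the volume residual by~\eqref{eq:resEstm}, so that $|\semilinDG{\uh}{\projh(\entrp\vp)-\entrp\vp}|\le C\sumnk\bigl(\intk|\nabla\cdot\fst(\uh)|\dx+\intdk|\uhk-\uhkn|\ds\bigr)\normgen{\projh(\entrp\vp)-\entrp\vp}_{\Linf(\telem)}$. An analogue of the super-approximation estimate~\eqref{eq:errorInfInt} applied to the smooth-times-polynomial argument $\entrp(\uh)\vp$ gives $\normgen{\projh(\entrp\vp)-\entrp\vp}_{\Linf(\telem)}\le C\h^2(1+\normgen{\nabla\uh}^2_{\Linf(\telem)})$, with $C$ depending on $\vp$, $\entr$ and the $\Linf$ bound on $\uh$ but not on second derivatives of $\uh$. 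Recognising that $\intk|\nabla\cdot\fst(\uh)|\dx+\intdk|\uhk-\uhkn|\ds=\h^{-\viscexp}|\telem|\,\epsvisc(\uh)$ and using the inverse estimate~\eqref{eq:LinfLtwoInverse}, this collapses to $|\semilinDG{\uh}{\projh(\entrp\vp)-\entrp\vp}|\le C\h^{2-\viscexp}\bigl(\sumnk\epsvisc(\uh)|\telem|+\sumnk\intk\epsvisc(\uh)|\nabla\uh|^2\dx\bigr)\le C\h^{2-\viscexp}$, which vanishes since $\viscexp<2$. Collecting the three non-negative sums and the two error terms, both of which tend to zero, the right-hand side of the reduced identity is bounded below by a quantity $\to0$, and taking $\liminf_{\h\to0}$ establishes~\eqref{eq:entropyIneq}. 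The admissible window $\z<\viscexp<2$ is consumed exactly by the jump estimate ($\viscexp>\z$) and the projection-error estimate ($\viscexp<2$).
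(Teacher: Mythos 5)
Your proof is correct in its core analytic content and follows the paper's overall architecture: test with $\projh(\entrp\vp)$, use \eqref{eq:DGentrTestFinal} with the time-boundary terms annihilated by the compact support of $\vp$, discard the sign-definite terms via \eqref{eq:nonnegativeEntr1}--\eqref{eq:nonnegativeEntr2}, split $\semilinS{\uh}{\entrp\vp}$ into a non-negative part plus a cross term, and show the cross term and the projection error $\semilinDG{\uh}{\eentr}$ vanish as $\h\to0$. Your treatment of $\semilinDG{\uh}{\eentr}$ (super-approximation for $\entrp(\uh)\vp$, identification of the residual-plus-jump sum with $\h^{-\viscexp}|\telem|\epsvisc(\uh)$, inverse estimate \eqref{eq:LinfLtwoInverse}, conclusion $O(\h^{2-\viscexp})$) is exactly the paper's, which imports the estimate (3.8c) from Jaffr\'e--Johnson--Szepessy. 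Where you genuinely diverge is in proving the key auxiliary bound $\sumnk\intk\epsvisc(\uh)\dx\to0$, i.e.\ the content of Lemma~\ref{eq:epsbound}. The paper proceeds through Lemma~\ref{lem:resControl2}, a weighted $\Ltwo$ residual bound $\sumnk\intk\h^{2\viscexp}|\nabla\cdot\fst(\uh)|^2\dx\leq C\h^{4\viscexp/3}$ proved by splitting each element into regions where $|\nabla\uh|$ is below/above the threshold $\h^{-\viscexp/3}$, and then applies Cauchy--Schwarz. You instead exploit the self-referential structure of \eqref{eq:shockcapt} directly: with $R_\telem:=\intk|\nabla\cdot\fst(\uh)|\dx$, the two lower bounds $\epsvisc(\uh)\geq\h^\viscexp|\telem|^{-1}R_\telem$ and $\normgen{\nabla\uh}_{\Ltwo(\telem)}\geq C^{-1}|\telem|^{-\z}R_\telem$ (the latter from \eqref{eq:resEstm} and Cauchy--Schwarz) turn the dissipation bound \eqref{eq:stabResult} into $\sumnk|\telem|^{-2}R_\telem^3\leq C\h^{-\viscexp}$, and H\"older with exponents $(3,\tfrac32)$ then gives $\sumnk R_\telem\leq C\h^{-\viscexp/3}$, hence the residual contribution is $O(\h^{2\viscexp/3})$ --- the same rate the paper obtains, with the jump part handled identically in both arguments at rate $\h^{\viscexp-\z}$. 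Your route is more direct (no region splitting, no intermediate $\Ltwo$ bound on the residual) at the price of being specific to the $\Lp{1}$ bound; the paper's Lemma~\ref{lem:resControl2} yields the stronger $\Ltwo$ control of the residual as a standalone statement. Both consume the window $\z<\viscexp<2$ in the same way.

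One gap you should close: your argument uses $\entrp$, $\entrpp$, and (through the super-approximation step) higher derivatives of $\entr$, so as written it establishes \eqref{eq:entropyIneq} only for \emph{smooth} convex entropies. The theorem claims it for all convex entropies, and what DiPerna's framework actually requires are the nonsmooth Kru\v{z}kov entropies $\entr_k=|\uu-k|$. The paper handles this by replacing $\entr_k$ with a mollified entropy $\entr_{k,\delta}$ having bounded derivatives, proving \eqref{eq:entropyIneq} for each fixed $\delta>0$ as $\h\to0$, and then letting $\delta\to0$ using the uniform convergence of $\entr_{k,\delta}$, $\q_{k,\delta}$ on compact sets (legitimate because $\uh$ is uniformly bounded in $\Linf$, so dominated convergence applies and the swap of the two limits is controlled). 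This step is short and standard, but without it your proof does not literally deliver the stated theorem.
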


In the remainder of this section we prove Theorem~\ref{eq:entropyCons}. {It} is well known that it is sufficient to show~\eqref{eq:entropyIneq} for all Kru\v{z}kov entropy pairs, i.e. convex entropies $\entr_k=|\uu-k|$ with $k\in\mathbb{R}$, and associated entropy fluxes~\cite{Kruzkov1970FIRST-ORDER-QUA,DiPerna1985Measure-valued-}. Following~\cite{jaffre:1995convDG,DiPerna1985Measure-valued-} we define a regularized Kru\v{z}kov entropy function $\entr_{k,\delta}$, converging to $\entr_k$ as $\delta \to 0$, uniformly on compact sets, while maintaining  convexity and uniformly bounded first derivatives. This allows us to define an associated regularized entropy flux $\q_{k,\delta}$, which converges uniformly on compact sets to the Kru\v{z}kov entropy flux.  Such a regularization can be accomplished with standard mollifiers. A more detailed exposition can be found, for example, in~\cite{DiPerna1985Measure-valued-,jaffre:1995convDG,szepessy:existence}. For the remainder of this section we simply identify $\entr\equiv\entr_{k,\delta}$ and $\q\equiv\q_{k,\delta}$ for some fixed $\delta>0$, which allows us to assume differentiability and that all derivatives $\entr^{(q)}$ are bounded. After establishing~\eqref{eq:entropyIneq} for the regularized entropy pairs, we  let $\delta\to 0$, and conclude theorem~\ref{eq:entropyCons} for the Kru\v{z}kov entropy pairs by applying dominated convergence.

Before we proceed with the proof, we give a few preparatory lemmas. First, note that in~\eqref{eq:stabResult} there is no term controlling the $\Ltwo$-norm of the flux divergence. Such terms arise in a straightforward manner from the streamline-diffusion stabilization~\cite{jaffre:1995convDG,Cockburn1996Error-Estimates,Szepessy1989Convergence-of-}. The next  lemma shows that we can control the flux divergence even in the absence of the streamline-diffusion stabilization. We shall need this result to complete our proofs.
\begin{lem}
\label{lem:resControl2}
Let $\uh$ be the solution produced by scheme~\eqref{eq:DGfinalScheme}. Then there exists a  constant $C> 0 $, independent of $\h$ and $\uh$, such that
\begin{eqnarray}
	\label{eq:boundres}
	&&\sumnk \intk \h^{2\beta} |\nabla \cdot \fst (\uh)|^2 \dx <   C \h^\frac{4\beta}{3}
\end{eqnarray}
where $\viscexp$ is defined in~\eqref{eq:shockcapt}. 
\end{lem}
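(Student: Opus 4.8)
The plan is to reduce everything to an estimate for the single quantity $S:=\sumnk\intk\h^{2\viscexp}|\nabla\cdot\fst(\uh)|^2\dx$ (the left-hand side of~\eqref{eq:boundres}) and to close it by a self-referential bootstrap that feeds $S$ back into itself. The only a priori control available is the viscous dissipation from the stability identity~\eqref{eq:stabResult}, namely $\sumnk\intk\epsvisc(\uh)|\nabla\uh|^2\dx\leq C$, together with the $\Linf$-bound of the Proposition and the definition~\eqref{eq:shockcapt} of $\epsvisc$. The difficulty is that the weight $\epsvisc$ degenerates (is $\mO(\h^{\viscexp})$ or smaller) exactly where $\uh$ is smooth, so one cannot simply drop it to recover $\sumnk\intk|\nabla\uh|^2\dx$.

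On a fixed $\telem$ I would first note, from~\eqref{eq:shockcapt} and $\res^{\mathrm{vol}}_\telem:=\intk|\nabla\cdot\fst(\uh)|\dx\leq\h^{-\viscexp}\epsvisc(\uh)|_\telem\,|\telem|$ (discarding the boundary part of the residual), the elementary bound
\[
\intk|\nabla\cdot\fst(\uh)|^2\dx\leq\normgen{\nabla\cdot\fst(\uh)}_{\Linf(\telem)}\intk|\nabla\cdot\fst(\uh)|\dx\leq\h^{-\viscexp}\normgen{\nabla\cdot\fst(\uh)}_{\Linf(\telem)}\,\epsvisc(\uh)|_\telem\,|\telem|.
\]
The $\Linf$-factor I would control with~\eqref{eq:resEstm} and the inverse inequality $\normgen{\nabla\uh}_{\Linf(\telem)}\leq C|\telem|^{-1/2}\normgen{\nabla\uh}_{\Ltwo(\telem)}$, giving $\normgen{\nabla\cdot\fst(\uh)}_{\Linf(\telem)}\leq C|\telem|^{-1/2}\normgen{\nabla\uh}_{\Ltwo(\telem)}$. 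Multiplying by $\h^{2\viscexp}$, summing, writing $\epsvisc=\epsvisc^{1/2}\cdot\epsvisc^{1/2}$ and applying Cauchy--Schwarz over the elements then yields
\[
S\leq C\h^{\viscexp}\Big(\sumnk\epsvisc(\uh)|_\telem\normgen{\nabla\uh}^2_{\Ltwo(\telem)}\Big)^{1/2}\Big(\sumnk\epsvisc(\uh)|_\telem\,|\telem|\Big)^{1/2}\leq C\h^{\viscexp}\Big(\sumnk\epsvisc(\uh)|_\telem\,|\telem|\Big)^{1/2},
\]
the last step by~\eqref{eq:stabResult}.

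It remains to bound $\sumnk\epsvisc(\uh)|_\telem\,|\telem|=\h^{\viscexp}\sumnk\big(\intk|\nabla\cdot\fst(\uh)|\dx+\intdk|\uhk-\uhkn|\ds\big)$. For the volume part I would use Cauchy--Schwarz on each $\telem$ and over the (finitely many, by compact support) elements, $\sumnk\intk|\nabla\cdot\fst(\uh)|\dx\leq C\big(\sumnk\intk|\nabla\cdot\fst(\uh)|^2\dx\big)^{1/2}=C\h^{-\viscexp}S^{1/2}$; this is the self-reference that generates the fractional power. The boundary part is a genuine lower-order term: by the $\Ltwo$ jump bound implied by~\eqref{eq:stabResult} and~\eqref{eq:jumpBound}, Cauchy--Schwarz over the skeleton gives $\sumnk\intdk|\uhk-\uhkn|\ds\leq C\h^{-1/2}$. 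Substituting back produces the algebraic inequality $S\leq C\h^{\viscexp}S^{1/4}+\mathrm{(l.o.t.)}$, and solving $S^{3/4}\lesssim\h^{\viscexp}$ gives the claimed $S\lesssim\h^{4\viscexp/3}$.

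The main obstacle is precisely the degeneracy of $\epsvisc$ on smooth elements, which forbids any direct estimate of $\sumnk\intk|\nabla\uh|^2\dx$; the remedy is the bootstrap above, made possible only by the $\Linf$-bound of the Proposition (used in the inverse inequalities) and by the fact that the shock-capturing weight reproduces the $\Lp1$-residual in its own definition. I would also have to check carefully that the boundary-jump contribution to $\sumnk\epsvisc(\uh)|_\telem\,|\telem|$ stays dominated by the volume term over the admissible range $\z<\viscexp<2$, so that the exponent $4\viscexp/3$ --- rather than a smaller power --- survives; this is the one place where the precise constants, and the compact-support bookkeeping for $\sumnk|\telem|$ and $\sumnk|\partial\telem|$, genuinely matter.
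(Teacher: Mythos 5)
Your bootstrap is a genuinely different route from the paper's, but it contains a real gap, and it is precisely the point you flag at the end: the boundary-jump contribution is \emph{not} lower order on most of the admissible range of $\viscexp$. Your argument requires an \emph{upper} bound on $\sumnk\intk\epsvisc(\uh)\dx$, and the jump part of definition~\eqref{eq:shockcapt} contributes $\h^{\viscexp}\sumnk\intdk|\uhk-\uhkn|\ds\lesssim\h^{\viscexp-\z}$ there (via~\eqref{eq:stabResult},~\eqref{eq:jumpBound} and Cauchy--Schwarz, as you say). Writing $S:=\sumnk\intk\h^{2\viscexp}|\nabla\cdot\fst(\uh)|^2\dx$, your chain of estimates therefore closes as
\[
S\;\leq\; C\h^{\viscexp}\bigl(S^{1/2}+\h^{\viscexp-\z}\bigr)^{1/2}\;\leq\; C\h^{\viscexp}S^{1/4}+C\h^{\frac{3\viscexp}{2}-\frac{1}{4}},
\]
which only yields $S\leq C\max\bigl(\h^{4\viscexp/3},\,\h^{3\viscexp/2-1/4}\bigr)$. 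Since $\frac{3\viscexp}{2}-\frac{1}{4}<\frac{4\viscexp}{3}$ exactly when $\viscexp<\frac{3}{2}$, the jump term dominates for all $\z<\viscexp<\frac{3}{2}$, so your proof delivers the claimed rate $\h^{4\viscexp/3}$ only for $\viscexp\geq\frac{3}{2}$. (The weaker power would in fact still suffice downstream, since $\frac{3\viscexp}{2}-\frac{1}{4}>0$ for $\viscexp>\z$, so Lemma~\ref{eq:epsbound} and Theorem~\ref{eq:entropyCons} would survive with degraded rates; but the lemma as stated is not proved.) A minor additional remark: the elementwise inverse estimate you invoke is a purely polynomial fact and does not need the $\Linf$-bound of the Proposition.

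The paper avoids this loss because it never needs an upper bound on $\epsvisc$; it only uses the \emph{lower} bound $\epsvisc(\uh)|_\telem\geq\h^{\viscexp}|\telem|^{-1}\intk|\nabla\cdot\fst(\uh)|\dx$, in which the jump term is discarded with a favorable sign. Concretely, each element is split pointwise by gradient size into $\telem^<:=\telem\cap\{|\nabla\uh|<\h^{-\viscexp/3}\}$ and $\telem^>:=\telem\cap\{|\nabla\uh|>\h^{-\viscexp/3}\}$. On $\telem^<$ the linear bound~\eqref{eq:resEstm} and the bounded total space-time volume give $\h^{2\viscexp}\cdot\h^{-2\viscexp/3}=\h^{4\viscexp/3}$ directly. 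On $\telem^>$ one inserts $1<\h^{\viscexp/3}|\nabla\uh|$, pulls out $\normgen{\nabla\uh}^2_{\Linf(\telem)}$, replaces $\intk|\nabla\cdot\fst(\uh)|\dx$ by $\h^{-\viscexp}\intk\epsvisc(\uh)\dx$, and then the inverse estimate~\eqref{eq:LinfLtwoInverse} converts $\normgen{\nabla\uh}^2_{\Linf(\telem)}\intk\epsvisc(\uh)\dx$ into $C\intk\epsvisc(\uh)|\nabla\uh|^2\dx$, which the dissipation identity~\eqref{eq:stabResult} bounds uniformly. Both regimes give the clean $\h^{4\viscexp/3}$, with the skeleton jumps never entering. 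To repair your argument you would have to separate the volume and jump parts of $\epsvisc$ before summing, at which point you essentially reproduce the paper's splitting.
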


\begin{proof}
%
%
Split the integral in~\eqref{eq:boundres} into integrals over $\telem^<:= \telem \cap \{x:|\nabla \uh(x)| < \h^{-\viscexpc}\}$ and $\telem^>:= \telem \cap \{x:|\nabla \uh(x)| > \h^{-\viscexpc}\}$. 
Using~\eqref{eq:resEstm} we have 
\begin{eqnarray}
	\h^{2\beta} \sumnk \int_{\telem^<}  |\nabla\cdot \fst(\uh)|^2 \dx \leq C	\h^{2\beta}\sumnk   \int_{\telem^<}   |\nabla \uh|^2 \dx \leq C \h^{\frac{4\beta}{3}} , \notag
\end{eqnarray}
and
\begin{eqnarray}
	\h^{2\beta}\sumnk  \int_{\telem^>}  |\nabla \cdot \fst( \uh)|^2 \dx &\leq & C \h^{2\beta}\sum_{\telem\in\triag}  \int_{\telem^>}  |\nabla \cdot \fst( \uh)| |\nabla \uh| \dx \notag\\
	&\leq &C\h^{2\beta}\sumnk  \int_{\telem^>}  |\nabla\cdot \fst( \uh)| |\nabla \uh|^2 \h^\viscexpc  \dx\notag\\
	&\leq &C\h^{2\beta+\frac{\beta}{3}} \sumnk \normgen{\nabla \uh}^2_{\Linf(\telem)} \int_{\telem}  |\nabla\cdot \fst( \uh)|  \dx\notag\\
	&\leq &C\h^{2\beta-\frac{2\beta}{3}} \sumnk \normgen{\nabla \uh}^2_{\Linf(\telem)} \int_{\telem}  \epsvisc(\uh)  \dx\notag\\
	&\leq &C\h^{\frac{4\beta}{3}} \sumnk  \int_{\telem} \epsvisc(\uh) |\nabla \uh|^2 \dx.\notag
\end{eqnarray}
Here we have used  the definition of the artificial viscosity~\eqref{eq:shockcapt} in the second-but-last inequality, and again the standard inverse estimate~\eqref{eq:LinfLtwoInverse}  in the last inequality. The lemma now follows from $\Ltwo$-stability~\eqref{eq:stabResult}.
\end{proof}

%
\begin{lem}
	\label{eq:epsbound}
	Let $\uh$ be the solution produced by scheme~\eqref{eq:DGspacetime}, and let $\epsvisc(\uh)$ be defined as in~\eqref{eq:shockcapt}. Then there exists a constant $C\geq 0$ such that for small enough $\h$
	\begin{equation}
		\label{eq:lemmaEps}
		\sumnk\intk \epsvisc(\uh) \dx\leq C \h ^{\viscexp-\z} 
	\end{equation}
\end{lem}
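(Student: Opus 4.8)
The plan is to substitute the definition~\eqref{eq:shockcapt} of the viscosity and exploit the cancellation of the element volume appearing in its denominator. Since $\epsvisc(\uh)$ is piecewise constant, integration over $\telem$ gives
\[
  \intk\epsvisc(\uh)\dx = \h^\viscexp\left(\intk|\nabla\cdot\fst(\uh)|\dx + \intdk|\uhk-\uhkn|\ds\right),
\]
so that $\sumnk\intk\epsvisc(\uh)\dx = \h^\viscexp\,(S_D+S_J)$, where $S_D:=\sumnk\intk|\nabla\cdot\fst(\uh)|\dx$ collects the residual contributions and $S_J:=\sumnk\intdk|\uhk-\uhkn|\ds$ collects the interface jumps. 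It therefore suffices to show $S_D+S_J\leq C\h^{-\z}$.

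For the interface term $S_J$ I would first apply Cauchy--Schwarz face by face, $\intdk|\uhk-\uhkn|\ds\leq |\partial\telem|^{\z}\left(\intdk|\uhk-\uhkn|^2\ds\right)^{\z}$, and then once more over the sum, obtaining $S_J\leq\left(\sumnk|\partial\telem|\right)^{\z}\left(\sumnk\intdk|\uhk-\uhkn|^2\ds\right)^{\z}$. The first factor is bounded by $C\h^{-\z}$: shape regularity~\eqref{eq:shapereg2} trades the face measure of each simplex for its volume, $|\partial\telem|\leq C|\telem|/\h$ (with $\h_\telem$ comparable to $\h$), while the total volume $\sumnk|\telem|$ is bounded independently of $\h$ because $\uh$ has compact support. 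The second factor is $\h$-independent by combining the quadratic interface control~\eqref{eq:jumpBound} with the discrete $\Ltwo$-stability estimate~\eqref{eq:stabResult}, which together give $\sumnk\intdk|\uhk-\uhkn|^2\ds\leq C$ (the time-like interface jumps being controlled directly by the $\int_{\Rn}(\uhp-\uhm)^2$ terms in~\eqref{eq:stabResult}). Hence $S_J\leq C\h^{-\z}$ and $\h^\viscexp S_J\leq C\h^{\viscexp-\z}$, which is the stated rate.

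The residual term $S_D$ is the delicate one, and is where Lemma~\ref{lem:resControl2} becomes essential: without streamline-diffusion stabilization there is no direct $\Ltwo$-control of $\nabla\cdot\fst(\uh)$ coming from~\eqref{eq:stabResult}. I would pass from $L^1$ to $L^2$ by Cauchy--Schwarz over the (compactly supported) domain, $S_D\leq\left(\sumnk|\telem|\right)^{\z}\left(\sumnk\intk|\nabla\cdot\fst(\uh)|^2\dx\right)^{\z}$, bound the first factor by a constant, and invoke Lemma~\ref{lem:resControl2}, which after dividing out $\h^{2\viscexp}$ yields $\sumnk\intk|\nabla\cdot\fst(\uh)|^2\dx\leq C\h^{-2\viscexp/3}$. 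This gives $S_D\leq C\h^{-\viscexp/3}$, hence $\h^\viscexp S_D\leq C\h^{2\viscexp/3}$, a decaying contribution dominated by the interface contribution $\h^{\viscexp-\z}$ in the relevant range of $\viscexp$. Collecting the two estimates yields~\eqref{eq:lemmaEps}.

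The main obstacle is thus the control of $S_D$: the quantity that discrete stability actually controls is the viscous dissipation $\sumnk\intk\epsvisc(\uh)|\nabla\uh|^2\dx$, and nothing links the raw residual $\nabla\cdot\fst(\uh)$ to it except the squared-residual bound of Lemma~\ref{lem:resControl2}, whose own proof already trades residual magnitude for dissipation through the $\telem^</\telem^>$ splitting at the threshold $|\nabla\uh|\sim\h^{-\viscexp/3}$. A secondary technical point is the bookkeeping of the total interface measure $\sumnk|\partial\telem|$, which relies on shape regularity~\eqref{eq:shapereg2} to convert the surface area of each simplex into its volume at the cost of one inverse power of $\h$.
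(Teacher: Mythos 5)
Your proof is correct and follows essentially the same route as the paper's: cancel the element volume in the definition~\eqref{eq:shockcapt}, split into residual and jump contributions, bound the jump term via Cauchy--Schwarz together with shape regularity~\eqref{eq:shapereg2}, the jump control~\eqref{eq:jumpBound} and the stability identity~\eqref{eq:stabResult}, and bound the residual term via Cauchy--Schwarz and Lemma~\ref{lem:resControl2}. The one caveat you hedge on (the residual contribution $\h^{2\viscexp/3}$ is dominated by $\h^{\viscexp-\z}$ only for $\viscexp\leq 3/2$, not the full range $\z<\viscexp<2$) is equally present and unaddressed in the paper's own proof, so it is a shared imprecision rather than a gap in your argument.
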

\begin{proof}
	Using the definition~\eqref{eq:shockcapt}, we have 
	\begin{eqnarray}
		\sumnk\intk \epsvisc(\uh) \dx  &=&\sumnk\intk\h^\viscexp |\nabla \cdot \fst (\uh)| \dx + \sumnk\intdk\h^\viscexp |\uhk - \uhkn| \dx \notag\\
		&\leq & \left(\sumnk\intk  \h^{2\beta} |\nabla \cdot \fst (\uh)|^2\dx \right)^\z  \left(\sumnk\intk\dx \right)^\z \notag \\
		&& \qquad+ \h^{\viscexp-\z} \left(\sumnk\intdk\h  \dx \right)^\z \left(\sumnk\intdk (\uhk - \uhkn)^2 \dx\right)^\z. \notag
	\end{eqnarray}
	The lemma follows from shape-regularity~\eqref{eq:shapereg2}, Lemma~\ref{lem:resControl2}, and the $\Ltwo$-stability results~\eqref{eq:stabResult},~\eqref{eq:jumpBound}.  
\end{proof}

Recall that we have assumed $\viscexp > \frac{1}{2}$, so the left-hand side of~\eqref{eq:lemmaEps} tends to zero as $\h\to 0$. We are now ready to prove the main result. 

\begin{proof}{(of Theorem~\ref{eq:entropyCons})} 
We start by considering~\eqref{eq:masterEqDG} for the regularized Kruzkov entropy and general non-negative $\vp$ with compact support in $(0,T)\times \mathbb{R}^d$. Furthermore, 
using the $\Hm{1}$ projection~\eqref{eq:H1proj}, we obtain from~\eqref{eq:DGentrTestFinal},  noting~\eqref{eq:nonnegativeEntr1},~\eqref{eq:nonnegativeEntr2},
\begin{align}
	\int_{\mathbb{R}_T^d} \qst(\uh)  \cdot \nabla\vp  \dx& \geq  \semilinDG{\uh}{\projh(\entrp \vp)-\entrp \vp}  +  \semilinS{\uh}{\entrp \vp} .  \notag
\end{align}
%
Split the shock-capturing term as follows:
\begin{eqnarray}
	  \semilinS{\uh}{\entrp \vp}  &= & \left( \sumnk \intk\eps |\nabla\uh|^2 \entrpp(\uh) \vp\dx +  \sumnk \intk\eps\nabla\uh\cdot \nabla\vp \entrp (\uh)\dx\right). \notag
\end{eqnarray}
Clearly, the first term is non-negative, while the second term vanishes as $\h\to 0$, i.e.
\begin{equation}
	\sumnk \intk\eps\nabla\uh\cdot \nabla\vp \entrp (\uh)\dx  \leq  C\left( \sumnk \intk\eps|\nabla\uh|^2 \dx  \right)^\frac{1}{2}\left( \sumnk \intk\epsvisc (\uh) \dx\right)^\frac{1}{2} \notag
\end{equation}
vanishes by $\Ltwo$-stability~\eqref{eq:stabResult} and Lemma~\ref{eq:epsbound}. 
Now define  $\eentr:= \projh(\entrp\vp) - \entrp\vp$, and write
\begin{eqnarray}
	 \semilinDG{\uh}{\eentr} =	\sumnk \left\{\intk \nabla\cdot \fst(\uh)\eentr  \dx+ \intdk \left( \fhat - \fst(\uhk) \cdot n\right)\eentr\ds  \right\} \notag. 
\end{eqnarray}
Using the definition of the shock-capturing operator~\eqref{eq:shockcapt},  we finally obtain by the estimates provided in~\cite{jaffre:1995convDG} (see (3.8c) in \cite{jaffre:1995convDG}) 
\begin{eqnarray}
	|\semilinDG{\uh}{\eentr}| &\leq & C\sumnk  \h^{2-\viscexp}\intk \epsvisc(\uh)\left(\normgen{\nabla\uh }^2_{\Linf(\telem)}+ 1\right) \normgen{\vp}_{\Wmp{\poldeg+1}{\infty}(\telem)}(1+\normgen{\uh}^{\poldeg-1}_{\Linf(\telem)}) \dx \notag\\
	&\leq& C \sumnk   \h^{2-\viscexp}\intk \epsvisc(\uh)\left(|\nabla\uh|^2 + 1\right) \dx
\end{eqnarray}
where~\eqref{eq:LinfLtwoInverse} and the uniform $\Linf$-stability.  The remaining integrals are bound by $\Ltwo$-stability and Lemma~\ref{eq:epsbound}, and we can conclude that $|\semilinDG{\uh}{\eentr}| \to 0$, as $\h\to0$. 

We have thus proved Theorem~\ref{eq:entropyCons} for the regularized Kru\v{z}kov entropy pair $\entr_{\delta,k}$, $\q_{\delta,k}$. 
Using the uniform convergence of $\entr_{\delta,k}$  and  $\q_{\delta,k}$ as  
$\delta\to 0$ on compact sets to $\entr_k$ and  $\q_k$ respectively, the entropy inequality~\eqref{eq:entropyIneq} follows by dominated convergence.  
\end{proof}

%


\section{Conclusion}

We have considered a class of space-time DG schemes for scalar hyperbolic conservation laws which do not use any streamline diffusion stabilization. We have shown that a shock-capturing term, somewhat simplified compared to similar schemes~\cite{jaffre:1995convDG}, provides sufficient stabilization. For all orders of polynomial approximation, the scheme admits a bound on the solution in $\Linf({[0,T]}\times \mathbb{R}^\dimx)$, and  is weakly consistent with all entropy inequalities. 

Moreover, the scheme is  consistent with the initial data. The proof is omitted here, as it clearly does not depend on streamline-diffusion stabilization. See~\cite{jaffre:1995convDG} and the references cited therein for details on consistency with the initial data.

Using DiPerna's theorem \cite{DiPerna1985Measure-valued-} we can conclude that the scheme converges to the unique entropy solution of~\eqref{eq:manProb1} for all polynomial orders. Similar convergence proofs  for nonlinear conservation laws have thus far only been available for schemes that use streamline diffusion stabilization.

\section*{Acknowledgments}
The authors are supported by the Deutsche Forschungsgemeinschaft (German Research
Association) through grant GSC 111.
The second author  would like to thank Dr.  Lukas Doering from Lehrstuhl I f\"ur Mathematik in RWTH Aachen,  for many helpful discussions and suggestions on the appendix.
%
%
\appendix
\section{Proof of Lemma~\ref{Lem::SC_Coerc}}
\label{sec:appendix}
First, we  review  some properties and theorems in polynomial inequalities. More specifically,  we introduce the Kneser inequality and its extension to multivariate polynomials which is the main interest here. Then some properties in the equivalency of norms in finite dimensional spaces are revisited. Then we are ready to give a short proof of Lemma~\ref{Lem::SC_Coerc}.

Recall the classic Kneser inequality \cite[p. 260]{borwein1995polynomials}:
\begin{thrm}\label{Thm::KneserOrig}
Suppose $\f = \g \hh$ where $\g \in \mathcal{P}^m_c$ and $\hh \in \mathcal{P}^{n-m}_c$, where $\mathcal{P}^m_c$ denotes the polynomials of order $m$ of one complex variable and complex coefficients. Then
\begin{equation}\label{Eq::KneserOrig}
\Vert \g \Vert_{[-1, 1]} \Vert \hh \Vert_{[-1, 1]} \leq \dfrac{1}{2} C_{n,m} C_{n, n-m} \Vert \f \Vert_{[-1, 1]},
\end{equation}
where by $\Vert \cdot \Vert_{[-1, 1]}$ we mean the uniform norm on the interval $[-1 , 1]$ of real numbers for continuous functions, and the coefficient $C_{n, m}$ is defined as
\begin{equation}
C_{n, m} := 2^m \prod_{k=1}^{m}  \big(1+ \cos \frac{(2k-1) \pi}{2n} \big).\notag
\end{equation}
\end{thrm}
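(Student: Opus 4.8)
The plan is to read the inequality as the solution of an \emph{extremal problem} and to show that the Chebyshev polynomial is the extremiser, at which the constant $\tfrac{1}{2}C_{n,m}C_{n,n-m}$ is attained with equality. First I would strip away the irrelevant degrees of freedom. Writing $f = a\,g_0 h_0$ with $g_0,h_0$ monic and $a$ the leading coefficient of $f$, the quantity to be bounded is scale-invariant:
\[
\frac{\Vert g\Vert_{[-1,1]}\,\Vert h\Vert_{[-1,1]}}{\Vert f\Vert_{[-1,1]}} = \frac{\Vert g_0\Vert_{[-1,1]}\,\Vert h_0\Vert_{[-1,1]}}{\Vert g_0 h_0\Vert_{[-1,1]}},
\]
so both the overall scale and the way $a$ is distributed between the factors drop out, and it suffices to maximise the right-hand ratio over monic $g_0$ of degree $m$ and monic $h_0$ of degree $n-m$. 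Since sending any single root to infinity makes the corresponding linear factor contribute equally to a factor norm and to the product norm, such a factor cancels in the ratio and drives it toward its trivial value $1$; hence the supremum is attained at a configuration with all roots in a bounded region, and a compactness argument yields an extremal pair $(g_0,h_0)$.

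The core step is to identify this extremiser. I would argue by perturbing the roots of $f=g_0h_0$: (i) any complex root, or any real root outside $[-1,1]$, can be moved toward $[-1,1]$ so as to decrease $\Vert f\Vert_{[-1,1]}$ at least as fast as it decreases $\Vert g_0\Vert_{[-1,1]}\Vert h_0\Vert_{[-1,1]}$, forcing all $n$ roots to be real and in $[-1,1]$ at the optimum; (ii) once all roots lie in $[-1,1]$, a variational analysis of the ratio at a stationary configuration forces the points of $[-1,1]$ at which $|f|$ equals $\Vert f\Vert_{[-1,1]}$ to equioscillate with the full count characteristic of Chebyshev, and the monic degree-$n$ polynomial with that equioscillation property is $\tilde T_n = 2^{1-n}T_n$, whence $f=\tilde T_n$. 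Finally, with the roots fixed at the Chebyshev nodes $x_k=\cos\frac{(2k-1)\pi}{2n}$, ordered $x_1>\dots>x_n$, I would optimise the \emph{assignment} of nodes to $g_0$ versus $h_0$ and show the extremal split is the one-sided one, sending the $m$ nodes closest to one endpoint to $g_0$.

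It then remains to compute the three norms explicitly at the extremiser. For $g_0=\prod_{k=1}^m(x-x_k)$ the maximum of $|g_0|$ on $[-1,1]$ is attained at $x=-1$, so $\Vert g_0\Vert_{[-1,1]}=\prod_{k=1}^m(1+x_k)=2^{-m}C_{n,m}$; using the symmetry $x_{n+1-j}=-x_j$ of the Chebyshev nodes, the complementary factor gives $\Vert h_0\Vert_{[-1,1]}=\prod_{k=m+1}^n(1-x_k)=\prod_{j=1}^{n-m}(1+x_j)=2^{-(n-m)}C_{n,n-m}$, while $\Vert g_0h_0\Vert_{[-1,1]}=\Vert\tilde T_n\Vert_{[-1,1]}=2^{1-n}$. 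Substituting into the ratio yields
\[
\frac{\Vert g_0\Vert_{[-1,1]}\Vert h_0\Vert_{[-1,1]}}{\Vert g_0h_0\Vert_{[-1,1]}} = \frac{2^{-m}C_{n,m}\cdot 2^{-(n-m)}C_{n,n-m}}{2^{1-n}} = \tfrac{1}{2}\,C_{n,m}C_{n,n-m},
\]
which, being the maximal value of the ratio, establishes the inequality for every factorisation $f=gh$.

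The main obstacle is step (ii): rigorously pinning down the extremiser as the Chebyshev polynomial. The reductions in the first paragraph and the closed-form computation in the third are routine, but the variational argument forcing equioscillation essentially re-derives Chebyshev's extremal property inside the constrained, two-factor problem, and one must additionally settle the combinatorial optimisation of the node-splitting. If the direct root-perturbation argument becomes unwieldy, a technically cleaner alternative is to pass to the unit circle via the Joukowski substitution $x=\tfrac{1}{2}(z+z^{-1})$, which turns $\Vert\cdot\Vert_{[-1,1]}$ into a sup-norm over $|z|=1$ of self-inversive polynomials and lets Mahler-measure and maximum-modulus estimates control the factors multiplicatively.
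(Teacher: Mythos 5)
Note first that the paper does not prove this statement at all: Theorem~\ref{Thm::KneserOrig} is quoted verbatim from Borwein and Erd\'elyi \cite[p.~260]{borwein1995polynomials}, so the benchmark is the classical proof of Kneser's inequality, not anything in this manuscript. Your final computation is correct and is genuinely worth something: with $x_k=\cos\frac{(2k-1)\pi}{2n}$, the symmetry $x_{n+1-j}=-x_j$, the one-sided split assigning the $m$ nodes nearest $+1$ to $g_0$, and $\Vert \widetilde T_n\Vert_{[-1,1]}=2^{1-n}$, the ratio evaluates exactly to $\frac{1}{2}C_{n,m}C_{n,n-m}$. This verifies that the stated constant is \emph{attained}, i.e.\ sharpness of the inequality.

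What is missing is the theorem itself: you never prove that this Chebyshev configuration \emph{maximizes} the ratio, and every ingredient of that claim is left as an assertion. Concretely: (a) the compactness reduction is wrong as stated --- sending a single root of $g_0$ to infinity drives the ratio to the value of the $(m-1,n-1)$ problem, not to $1$, so attainment of the supremum needs an induction on degree together with monotonicity of the constants $C_{n,m}$; (b) step (i), moving complex roots and roots outside $[-1,1]$ into the interval without decreasing the ratio, is a genuine lemma requiring proof, and it is precisely where Kneser's original argument does its real work; (c) step (ii) cannot be settled by ``stationarity forces equioscillation'': the objective is a ratio of three sup-norms, a nonsmooth functional whose critical configurations need not make $|f|$ equioscillate $n+1$ times, and the extremal $f$ is not characterized by the monic minimal-norm property since the numerator also depends on the zero assignment; the combinatorial optimality of the one-sided split is likewise only asserted. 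Even the endpoint identity $\Vert g_0\Vert_{[-1,1]}=|g_0(-1)|$ needs an argument when $m>n/2$, because then some assigned nodes are negative and interior local maxima must be excluded (a pairing argument on $\prod\frac{1-x_k}{1+x_k}$ handles the endpoints, but not the interior). Finally, your proposed fallback via the Joukowski substitution and Mahler measure cannot rescue the sharp form: multiplicativity of the Mahler measure yields constants of exponential type, of the flavor of the corollary's $3.3^n$, never the exact Chebyshev-node product $\frac{1}{2}C_{n,m}C_{n,n-m}$. In short, you have a correct verification of the equality case wrapped around an unproven extremality claim; as a proof of the inequality the proposal has a genuine gap at its core step.
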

One might simplify the complicated form of the coefficients $C_{n, m}$ with this corollary from \cite[p. 261]{borwein1995polynomials}:
\begin{cor} Under the assumptions of Theorem \ref{Thm::KneserOrig}, the following holds:
\begin{equation}\label{Eq::KneserSimpli}
\Big(\dfrac{\Vert \g \Vert_{[-1, 1]} \Vert \hh \Vert_{[-1, 1]}}{\Vert \f \Vert_{[-1, 1]}} \Big)^{1/n} \lesssim 3.3.
\end{equation}
\end{cor}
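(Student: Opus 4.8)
The plan is to read off \eqref{Eq::KneserSimpli} directly from the explicit product formula for $C_{n,m}$ in Theorem~\ref{Thm::KneserOrig}. Dividing \eqref{Eq::KneserOrig} by $\Vert\f\Vert_{[-1,1]}$ and taking $n$-th roots, it suffices to bound $\big(\tfrac12 C_{n,m}C_{n,n-m}\big)^{1/n}$ uniformly in $n$ and in $0\le m\le n$. Setting $a_k:=1+\cos\frac{(2k-1)\pi}{2n}=2\cos^2\frac{(2k-1)\pi}{4n}$, the definition of the coefficients gives
\[
C_{n,m}C_{n,n-m}=2^{n}\Big(\prod_{k=1}^{m}a_k\Big)\Big(\prod_{k=1}^{n-m}a_k\Big),
\]
so everything reduces to estimating a product of the positive numbers $a_k$.

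First I would eliminate the dependence on $m$. Because $a_k$ decreases in $k$, the increments of $j\mapsto S(j):=\sum_{k=1}^{j}\log a_k$ are decreasing, so $S$ is concave; hence $S(m)+S(n-m)$ is maximal at $m=n/2$ (for odd $n$ one takes $m=\lfloor n/2\rfloor$, which alters the estimate only by a bounded factor). It therefore suffices to bound
\[
\Big(\tfrac12\,2^{n}\big(\textstyle\prod_{k=1}^{n/2}a_k\big)^{2}\Big)^{1/n}=2^{\,1-1/n}\Big(\prod_{k=1}^{n/2}a_k\Big)^{2/n}.
\]

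Next I would recognise $\frac{2}{n}\sum_{k=1}^{n/2}\log a_k$ as a midpoint Riemann sum for $\theta\mapsto\log(1+\cos\theta)$ on $[0,\pi/2]$, sampled at $\theta_k=\frac{(2k-1)\pi}{2n}$ with mesh $\pi/n$. Letting $n\to\infty$ yields the limit $\frac{2}{\pi}\int_0^{\pi/2}\log(1+\cos\theta)\,d\theta$. Using $1+\cos\theta=2\cos^2(\theta/2)$ together with the classical evaluation $\int_0^{\pi/4}\log\cos u\,du=\tfrac{G}{2}-\tfrac{\pi}{4}\log2$, where $G$ is Catalan's constant, this integral equals $2G-\tfrac{\pi}{2}\log2$, so the displayed quantity tends to
\[
2\exp\!\Big(\tfrac{2}{\pi}\big(2G-\tfrac{\pi}{2}\log2\big)\Big)=e^{4G/\pi}\approx 3.21 .
\]

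The hard part will be turning this limit into a bound valid for every finite $n$, which is exactly the content of the symbol $\lesssim$. Since $f(\theta)=\log(1+\cos\theta)$ satisfies $f''(\theta)=-\tfrac12\sec^2(\theta/2)$, which is bounded on $[0,\pi/2]$, the composite midpoint rule overestimates the integral (concavity) with error $O(1/n^2)$; combining this with the prefactor I would show $(1-\tfrac1n)\log2+\frac2n\sum_{k=1}^{n/2}\log a_k\le \tfrac{4G}{\pi}-\tfrac{\log2}{n}+O(1/n^2)$, whose right-hand side stays below $\log 3.3$ for all $n$ (the smallest cases being checked by hand, where the values $2.41,\,2.74,\dots$ are comfortably smaller). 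This gives $\big(\tfrac12 C_{n,m}C_{n,n-m}\big)^{1/n}\le e^{4G/\pi}<3.3$ uniformly, hence \eqref{Eq::KneserSimpli}.
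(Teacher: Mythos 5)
Your proposal is correct, but it is worth noting that the paper itself offers no proof of this corollary at all: it is quoted verbatim from \cite[p.~261]{borwein1995polynomials} as a known consequence of Kneser's inequality, and the authors never derive the numerical value $3.3$. What you have written is a genuine, self-contained derivation of that cited fact, and it is sound. The reduction is exactly the natural one: since Theorem~\ref{Thm::KneserOrig} gives $\Vert g\Vert\Vert h\Vert\le\tfrac12 C_{n,m}C_{n,n-m}\Vert f\Vert$, it suffices to bound $\bigl(\tfrac12 C_{n,m}C_{n,n-m}\bigr)^{1/n}$ uniformly; your concavity argument (the increments $\log a_k$ are decreasing, so $S(m)+S(n-m)$ peaks at the middle) correctly removes the $m$-dependence, the midpoint-rule identification of $\tfrac2n\sum_{k\le n/2}\log a_k$ with $\tfrac2\pi\int_0^{\pi/2}\log(1+\cos\theta)\,d\theta=\tfrac{4G}{\pi}-\log 2$ is right, and the Catalan-constant evaluation gives the classical asymptotic constant $e^{4G/\pi}\approx 3.21<3.3$. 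You also correctly flagged the one point where the argument could have gone wrong: for a concave integrand the midpoint sum \emph{over}estimates the integral, so the Riemann-sum limit alone only gives the bound asymptotically; your fix — playing the $-\tfrac{\log 2}{n}$ coming from the prefactor $2^{1-1/n}$ against the $O(1/n^2)$ quadrature error — does close this gap, and with the explicit error constant ($|f''|\le 1$ on $[0,\pi/2]$ gives total error at most $\pi^3/(48n^2)$, i.e.\ $\tfrac2\pi E_n\le \pi^2/(24n^2)<\tfrac{\log 2}{n}$ for all $n\ge 1$) one even gets the clean uniform bound $e^{4G/\pi}$ without any hand-checking of small cases. The only loose end is the odd-$n$ case, which you wave at; in fact it is harmless for a pleasant reason: the extra node sits at $\theta=\pi/2$, where $\log(1+\cos\theta)=0$, so the same estimate applies verbatim. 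In short: the paper buys the result by citation, while your argument actually proves it, essentially reconstructing the analysis behind the constant $3.20991\ldots$ in the literature.
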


By a change of variables, inequality \eqref{Eq::KneserOrig} can be easily extended to any real interval $J = [a, b]$, i.e.
\begin{equation}\label{Eq::KneserAny}
\Vert \g \Vert_J \Vert \hh \Vert_J \leq K_n \Vert \f \Vert_J.
\end{equation} 
Let us call $K_n$ \emph{Kneser's constant}, which only depends on $n$. Using  \eqref{Eq::KneserSimpli}, we can  set  $K_n=3.3^n$ . Also note that by the definition of the uniform norm we may write $\Vert \cdot \Vert_J = \Vert \cdot \Vert_{\Linf (J)}$.

Here we are interested in the case where the polynomial coefficients are real, and  we seek the extension of Theorem~\ref{Thm::KneserOrig} to general multivariate real polynomials of total degree $n$
in a convex domain  $J \subset \mathbb{R}^d$. We can prove the following theorem, based on \cite[proof of Theorem 2.2]{andr2006norms}:
\begin{thrm} Assume $J\subset \mathbb{R}^d$ is convex and $\f$ is a polynomial of $d$ variables of total degree $n$ defined on $J$ such that $\f = \g\hh$ where the factors $\g$ and $\hh$ are polynomials of total degree less than $n$. Then the following holds
\begin{equation}\label{Eq::Knesermulti}
\Vert \g \Vert_{\Linf(J)} \Vert \hh \Vert_{\Linf(J)} \leq K_n \Vert \f \Vert_{\Linf(J)},
\end{equation}
where $K_n$ is the same as the constants defined in \eqref{Eq::KneserOrig} or  \eqref{Eq::KneserSimpli}.
\end{thrm}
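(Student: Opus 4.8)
The plan is to reduce the multivariate inequality to the one–dimensional case \eqref{Eq::KneserAny} by restricting $\f$, $\g$, $\hh$ to a single, well-chosen line segment inside $J$. First I would dispose of the trivial situations: if either factor vanishes identically the claim is immediate, so I assume $\g,\hh\not\equiv 0$; and since a nonconstant polynomial is unbounded on an unbounded set, I take $J$ to be compact (closed and bounded), so that all three uniform norms are attained. I then choose points $a,b\in J$ with $|\g(a)|=\Vert\g\Vert_{\Linf(J)}$ and $|\hh(b)|=\Vert\hh\Vert_{\Linf(J)}$.

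Next I consider the segment $\ell(t)=(1-t)a+tb$, which lies in $J$ for $t\in[0,1]$ by convexity. Restricting along $\ell$ produces univariate polynomials $q(t):=\g(\ell(t))$, $r(t):=\hh(\ell(t))$, and $p(t):=\f(\ell(t))=q(t)\,r(t)$ on the interval $[0,1]$. Because the univariate polynomial ring is an integral domain, $\deg p=\deg q+\deg r\le \deg\g+\deg\hh=n$, the last equality holding since $\f=\g\hh$ has total degree $n$ with factors of degree less than $n$.

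The crucial observation, and the reason the reduction works, is that this single one-dimensional restriction realizes both full norms simultaneously. Since $\ell([0,1])\subset J$ we have $\Vert q\Vert_{\Linf([0,1])}\le\Vert\g\Vert_{\Linf(J)}$, while $|q(0)|=|\g(a)|=\Vert\g\Vert_{\Linf(J)}$ forces equality; evaluating at the endpoint $t=1$ gives the corresponding identity $\Vert r\Vert_{\Linf([0,1])}=\Vert\hh\Vert_{\Linf(J)}$, and trivially $\Vert p\Vert_{\Linf([0,1])}\le\Vert\f\Vert_{\Linf(J)}$. Applying \eqref{Eq::KneserAny} to $p=q\,r$ on $[0,1]$ yields $\Vert q\Vert\,\Vert r\Vert\le K_{\deg p}\,\Vert p\Vert$, and since $K_m=3.3^m$ is increasing in $m$ and $\deg p\le n$, I may replace $K_{\deg p}$ by $K_n$. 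Chaining these relations gives $\Vert\g\Vert_{\Linf(J)}\Vert\hh\Vert_{\Linf(J)}=\Vert q\Vert\,\Vert r\Vert\le K_n\Vert p\Vert\le K_n\Vert\f\Vert_{\Linf(J)}$, which is the assertion. The degenerate case $a=b$ is handled separately and trivially, since there $\Vert\g\Vert_{\Linf(J)}\Vert\hh\Vert_{\Linf(J)}=|\f(a)|\le\Vert\f\Vert_{\Linf(J)}\le K_n\Vert\f\Vert_{\Linf(J)}$.

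I expect the main obstacle to be precisely the simultaneous-norm step: the whole argument hinges on producing \emph{one} line that attains $\Vert\g\Vert_{\Linf(J)}$ and $\Vert\hh\Vert_{\Linf(J)}$ at once, which is exactly where convexity of $J$ is essential, guaranteeing that the segment joining the two maximizers stays inside $J$. A secondary point requiring care is that the Kneser constant depends on the degree of the product, so I use the manifestly monotone simplified form $K_n=3.3^n$ from \eqref{Eq::KneserSimpli} rather than the exact coefficients; this ensures that dropping to the possibly smaller degree $\deg p$ can only decrease the constant.
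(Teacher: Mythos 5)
Your proof is correct and follows essentially the same route as the paper's own proof: restrict $\f$, $\g$, $\hh$ to the segment joining the two maximizers of $\g$ and $\hh$ (which lies in $J$ by convexity), observe that both factor norms are attained on that segment while $\Vert \f \Vert_{\Linf(S)} \leq \Vert \f \Vert_{\Linf(J)}$, and invoke the one-dimensional Kneser inequality. Your additional care with the degrees of the restricted polynomials, the monotonicity of $K_m = 3.3^m$, and the degenerate case $a=b$ merely tightens details the paper leaves implicit.
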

\begin{proof}
Let us assume the $x_0, x_1 \in J$ are the maximum points of $\g$ and $\hh$ respectively, i.e.\
\begin{equation}
\Vert \g \Vert_{\Linf(J)}  = \vert \g(x_0) \vert, \qquad \Vert \hh \Vert_{\Linf(J)}  = \vert \hh(x_1) \vert.\notag
\end{equation}
Define the line segment $S:=\{x=t x_1 + (t-1) x_0; t\in [0,1]\}$. Due to convexity of $J$, one knows that  $S \subset J$. So the Kneser inequality holds on this line
\begin{equation}
\Vert \g \Vert_{\Linf(S)} \Vert \hh \Vert_{\Linf(S)} \leq K_n \Vert \f \Vert_{\Linf(S)}.\notag
\end{equation}
The special property of segment $S$, $x_0, x_1 \in S$, gives
\begin{equation}
\Vert \g \Vert_{\Linf(J)}  =  \Vert \g \Vert_{\Linf(S)} , \qquad  \Vert \hh \Vert_{\Linf(J)}  =  \Vert \hh \Vert_{\Linf(S)},\notag
\end{equation}
On the other hand it is clear that $\Vert \f \Vert_{\Linf(S)} \leq \Vert \f \Vert_{\Linf(J)}$, so \eqref{Eq::Knesermulti} holds.
\end{proof}

Also we will use the following equivalency of $\Lp{p}$ norms for a general function $f$ in a finite ($N$-) dimensional {space} $\mathbb{X}_N$ and $1 \leq q \leq p \leq \infty$,
\begin{equation}\label{Eq::Nrmequi}
\dfrac{1}{\vert J \vert^{1/q - 1/p} }\Vert  f \Vert_{\Lp{q}(J)} \leq  \Vert f \Vert_{\Lp{p}(J) }  \leq \dfrac{\hat{C}(\mathbb{X}_N, p, q, d)}{\vert J \vert^{1/q - 1/p}} \Vert f \Vert_{\Lp{q}(J)},
\end{equation}
where the left inequality comes directly from H\"older's inequality and the right inequality can be found in \cite[p. 140]{ciarlet1978finite}. Note that by the dependency on $\mathbb{X}_N$ we mean the dependency on the type of this space like $d$ and $N$ (or, in our case, equivalently the polynomial total degree $n$).

Now we are ready to present the proof of Lemma \ref{Lem::SC_Coerc}.

\begin{proof}[Proof of Lemma \ref{Lem::SC_Coerc}] Let us consider $J = \telem$ in the result showed before. By definition of the $\Hm{1}$ projection~\eqref{eq:H1proj} one has
\begin{equation}
\intk \nabla\vv \cdot \nabla (\projh(\vv^{\altpoldeg-1})) \dx = \intk \nabla\vv \cdot \nabla (\vv^{\altpoldeg-1})  \dx = (p-1) \intk \vert \nabla\vv \vert^2 \vv^{\altpoldeg-2} \dx.\notag
\end{equation}
The goal of the proof is to find a lower bound for the $\Vert \vert \nabla\vv \vert^2 \vv^{\altpoldeg-2}  \Vert_{\Lp{1}(\telem)}$ for even integer $p$ as the following
\begin{align}\label{Eq::result}
\lpnorm{ \vert \nabla\vv \vert^2   \vv^{\altpoldeg-2} }{1}{\telem}  \geq C \lpnorm{ \vert \nabla\vv \vert^2 }{1}{\telem}  \lpnorm{ \vv^{\altpoldeg-2} }{\infty}{\telem}  
 \end{align}	
Here $ \vert \nabla\vv \vert^2  $ and $\vv^{\altpoldeg-2}$ are polynomials; since $\vv \in \polp $, we know that 
\begin{equation*}
\nabla \vv \in (\polsp^{\poldeg-1})^d, \quad \vv^{p-2} \in \polsp^{\poldeg (p-2)},\quad \vert \nabla \vv\vert^2 \vv^{p-2} \in \polsp^{p \poldeg-2}.\notag
\end{equation*} 

Using the equivalence of norms ($\Lp{1}$ and $\Linf$) as in \eqref{Eq::Nrmequi} gives
\begin{equation}
\lpnorm{ \vert \nabla\vv \vert^2 \vv^{\altpoldeg-2}  }{1}{\telem} \geq \dfrac{\seminorm{J}} {\hat{C}(\polsp^{p \poldeg-2})}  \lpnorm{ \vert \nabla\vv \vert^2 \vv^{\altpoldeg-2}  }{\infty}{\telem}\notag
\end{equation}
Next we apply the Kneser inequality in the form of \eqref{Eq::Knesermulti} and again norm equivalency \eqref{Eq::Nrmequi} to get
\begin{align}
\lpnorm{ \vert \nabla\vv \vert^2  \vv^{\altpoldeg-2} }{1}{\telem}& \geq \dfrac{\seminorm{J}} { K_{pq-2} \hat{C}(\polsp^{p \poldeg-2})}  
  \lpnorm{ \vert \nabla\vv \vert^2  }{\infty}{\telem}  \lpnorm{   \vv^{\altpoldeg-2} }{\infty}{\telem}
  \nonumber \\
&\geq \dfrac{1}{K_{pq-2} \hat{C}(\polsp^{p \poldeg-2})}  \lpnorm{ \vert \nabla\vv \vert^2  }{1}{\telem}  \lpnorm{   \vv^{\altpoldeg-2} }{\infty}{\telem}.\notag
\end{align}
So the lower bound constant in \eqref{Eq::result} is $\dfrac{1}{K_{pq-2} \hat{C}(\polsp^{p \poldeg-2})}$; which yields
\begin{equation}
	\intk \normgen{\vv}^{p-2}_{\Linf(\telem)} |\nabla \vv|^2\dx  \leq \dfrac{K_{pq-2} \hat{C}(\polsp^{p \poldeg-2})}{p-1}  \intk \nabla\vv \cdot \nabla (\projh(\vv^{\altpoldeg-1})) \dx.\notag
\end{equation}
\end{proof}

For brevity we show the constant $\dfrac{K_{pq-2} \hat{C}(\polsp^{p \poldeg-2})}{p-1} $ by its two effective parameters as $\hat{C}(q, p)$. 
\bibliographystyle{siam}
\bibliography{bibtex_database}

\end{document}